\setlist{labelsep=.25in,leftmargin=*,labelindent=1cm,topsep=2pt,noitemsep}	
\setlist[enumerate]{label=(\roman*)}
\theoremstyle{plain}
\newtheorem{theorem}{Theorem}[section]
\newtheorem{proposition}[theorem]{Proposition}
\newtheorem{lemma}[theorem]{Lemma}
\newtheorem*{theorem*}{Theorem}
\theoremstyle{remark}
\newtheorem*{lemma*}{Lemma}
\theoremstyle{definition}
\newtheorem{example}[theorem]{Example}
\numberwithin{equation}{section}
\newcommand{\Polya}{P\'{o}lya }
\newcommand{\limN}{\lim_{n\rightarrow\infty}}
\newcommand{\limsupN}{\limsup_{n\rightarrow\infty}}
\newcommand{\liminfN}{\liminf_{n\rightarrow\infty}}
\newcommand{\sumN}{\sum_{i=1}^n}
\newcommand{\sumINF}{\sum_{n=1}^\infty}
\title{Infinite-color randomly reinforced urns with dominant colors}
\author{Hristo Sariev\textsuperscript{ a}\thanks{Corresponding author: h.sariev@math.bas.bg}, Sandra Fortini\textsuperscript{ b}, Sonia Petrone\textsuperscript{ b}\vspace{.2cm}\\
\textit{\textsuperscript{a }Institute of Mathematics and Informatics, Bulgarian Academy of Sciences}\\
\textit{\textsuperscript{b }Department of Decision Sciences, Universit\`a Commerciale Luigi Bocconi}}
\date{}
\begin{document}

\maketitle



\begin{abstract}
\noindent We define and prove limit results for a class of {\em dominant \Polya sequences}, which are randomly reinforced urn processes with {\em color-specific} random weights and {\em unbounded} number of possible colors. Under fairly mild assumptions on the expected reinforcement, we show that the predictive and the empirical distributions converge almost surely (a.s.) in total variation to the same random probability measure $\tilde{P}$; moreover, $\tilde{P}(\mathcal{D})=1$ a.s., where $\mathcal{D}$ denotes the set of {\em dominant} colors for which the expected reinforcement is maximum. In the general case, the predictive probabilities and the empirical frequencies of any $\delta$-neighborhood of $\mathcal{D}$ converge a.s. to one. That is, although {\em non}-dominant colors continue to be regularly observed, their distance to $\mathcal{D}$ converges in probability to zero. We refine the above results with rates of convergence. We further hint potential applications of dominant \Polya sequences in randomized clinical trials and species sampling, and use our central limit results for Bayesian inference.
\end{abstract}

\noindent{\bf Keywords:}
Reinforced processes, \Polya sequences, Random probability measures, Species sampling, Bayesian nonparametrics

\noindent{\bf MSC2020 Classification:} 60F15, 60B10, 60F05, 60G57

\section{Introduction}
\label{section:intro}

Random processes with reinforcement constitute an important class of mathematical models that are of interest to probabilistis and practitioners alike; see \cite{pemantle2007} for a review on the subject. A classic example is the $k$-color {\em \Polya urn} model, which describes the composition of an urn with balls of $k$ different colors that is sequentially being sampled and reinforced with an additional ball of the observed color. Denote by $X_n\in\{1,\ldots,k\}$ the color of the additional ball at step $n$. Then, $X_1$ is chosen from a discrete distribution with $\mathbb{P}(X_1=j)=m_j\bigl/\sum_{l=1}^km_l$, $j=1,\ldots,k$, where $m_j>0$ is the initial number of balls of color $j$ in the urn, and, for every $n\geq1$,
$$\mathbb{P}(X_{n+1}=j|X_1,\ldots,X_n)=\frac{m_j+\sumN\delta_{X_i}(\{j\})}{\sum_{l=1}^km_l+n},$$
where $\delta_x$ is the Dirac measure at the point $x$. It is well-known (see, e.g., \citep[][p.30]{pemantle2007}) that, for each $j=1,\ldots,k$, as $n$ approaches infinity, the proportion of $j$-colored balls in the urn converges almost surely (a.s.) to a random variable $\tilde{p}_j$, that is,
$$\mathbb{P}(X_{n+1}=j|X_1,\ldots,X_n)\overset{a.s.}{\longrightarrow}\tilde{p}_j.$$
Moreover, the random vector $\tilde{p}=(\tilde{p}_1,\ldots,\tilde{p}_k)$ has a Dirichlet distribution with parameters $(m_j)_{j=1}^k$ and the process $(X_n)_{n\geq1}$ is exchangeable; thus, by de Finetti representation theorem for exchangeable sequences, see, e.g., \citep[][Theorem 3.1]{aldous1985}, given $\tilde{p}$, the $X_n$ are conditionally independent and identically distributed (i.i.d.) according to $\tilde{p}$.

In this paper, we study the generalization of the above urn sampling scheme towards infinite colors and random reinforcement. Let $(\mathbb{X},\mathcal{X})$ be a measurable space, $\nu$ a probability measure on $\mathbb{X}$, and $\theta>0$ a positive constant. Formally, we consider a sequence of $\mathbb{X}$-valued random variables $(X_n)_{n\geq 1}$ with predictive structure given by $\mathbb{P}(X_1\in\cdot)=\nu(\cdot)$ and, for every $n\geq 1$,
\begin{equation}\label{eq:intro:rrps}
\mathbb{P}(X_{n+1}\in\cdot\mid X_1,W_1,\ldots,X_n,W_n)=\sumN\frac{W_i}{\theta+\sum_{j=1}^nW_j}\delta_{X_i}(\cdot)+\frac{\theta}{\theta+\sum_{j=1}^nW_j}\nu(\cdot),
\end{equation}
where $(W_n)_{n\geq1}$ is a sequence of non-negative random weights. The important extension of the $k$-color \Polya urn model to infinite colors, also known as \textit{\Polya sequence}, was proposed by Blackwell and MacQueen \cite{blackwell1973} and corresponds to the case of constant reinforcement $W_n=1$ and $\nu$ possibly diffuse. By Theorem 1 in \cite{blackwell1973}, the predictive distributions \eqref{eq:intro:rrps} of a \Polya sequence converge weakly with probability one to a random probability measure $\tilde{P}$ on $\mathbb{X}$, which has a Dirichlet process distribution with parameters $(\theta,\nu)$. Moreover, given $\tilde{P}$, the random variables $X_1,X_2,\ldots$ are i.i.d. with marginal distribution $\tilde{P}$; thus, they are exchangeable. We recall that, for any exchangeable sequence, the predictive and the empirical distributions converge to the same random probability measure, which is called the \textit{directing random measure} of the process; see \citep[][Section 2]{aldous1985}.

Because of their simple predictive structure, \Polya sequences play a fundamental role in the construction of more complex procedures for nonparametric Bayesian inference; see \cite{fortini2012} for a review. On the other hand, there is a growing interest from different areas of research, such as signalling theory \citep{alexander2012}, economics \citep{beggs2005}, clinical trial design \citep{rosenberger2016}, in extending the basic \Polya urn model along the lines of time-dependent or random reinforcement. The predictive construction \eqref{eq:intro:rrps} thus represents a natural and important contribution in this direction. However, the weighing scheme in \eqref{eq:intro:rrps} may lead to an unbalanced design, which can violate the condition of exchangeability. In general, an exchangeable process $(X_n)_{n\geq1}$ with prediction rule $\eqref{eq:intro:rrps}$ is a member of the family of \textit{species sampling sequences} \cite{pitman1996}, which implies by \citep[][Theorem 1]{hansen2000} a rather restrictive form of dependence between $X_n$ and $W_n$.

Bassetti and coauthors \cite{bassetti2010} define a class of \textit{generalized} species sampling sequences that have predictive distributions of the kind $\eqref{eq:intro:rrps}$, and the process they generate is \textit{conditionally identically distributed} (c.i.d.). By \citep{berti2004}, a stochastic process $(Y_n)_{n\geq1}$ is c.i.d. if, for every $n\geq1$, given $Y_1,\ldots,Y_n$, the random variables $Y_{n+1},Y_{n+2},\ldots$ are identically distributed. Therefore, any exchangeable process is c.i.d., while the converse is generally not true; indeed, by \citep[][Proposition 2.1]{kallenberg1988}, a sequence of random variables is exchangeable if and only if it is both stationary and c.i.d. Nevertheless, c.i.d. processes are asymptotically exchangeable and thus preserve basic limit properties of exchangeable sequences; see \citep{berti2004} for more details. In \eqref{eq:intro:rrps}, the presence of the random weights $W_n$ can lead to a temporary disequilibrium in the observation process. Then it may be reasonable to assume that $(X_n)_{n\geq1}$ is c.i.d., and a sufficient condition due to \citep{bassetti2010} is that $W_n$ and $X_n$ are conditionally independent given $(X_1,W_1,\ldots,X_{n-1},W_{n-1})$.
 
In this work, we focus on the case where the random reinforcement $W_n$ in \eqref{eq:intro:rrps} \textit{depends} explicitly on the color $X_n$, and the process $(X_n)_{n\geq 1}$ is in general neither exchangeable nor c.i.d. This situation occurs in many contexts; for example, in problems of species sampling, color-specific random weights describe different levels of species adaptability. Then we expect that colors which have a higher expected reinforcement will tend to {\em dominate} the observation process, just as better-adapted species have a greater chance at survival. To the best of our knowledge, existing studies consider only the model with $k$ colors, which is known as a randomly reinforced urn (RRU), and show that the probability of drawing a color whose expected weight is maximum goes indeed to one, see, e.g., \cite{berti2010,may2009,muliere2006,zhang2014}. We thus explore the extension of RRUs to an unbounded set of possible colors, which parallels the extension provided by exchangeable \Polya sequences for $k$-color \Polya urns. 

Recently, \cite{bandyopadhyay2017,janson2019,mailler2017} introduced a general class of random processes with reinforcement, called measure-valued \Polya urn processes, which formalize the idea of urn composition as a random finite measure. In this setting, the conditional distribution \eqref{eq:intro:rrps} can be regarded as the (normalized) composition of an urn, which is ``reinforced'' at each time $n$ with the random measure $W_n\delta_{X_n}$; see \citep{fortini2021} for more details. A proper urn model to illustrate the sampling procedure implied by \eqref{eq:intro:rrps} is a weighted version of Hoppe's urn \citep{hoppe1984}. Imagine an urn initially containing $\theta$ black balls. Then, at each step $n$, a ball is picked at random from the urn. If the ball is black, we generate a new color from $\nu$ and update the urn with a random number of balls of the new color; if a non-black ball is picked instead, it is replaced together with a random number of balls of the same color. It follows that the parameter $\theta$ controls the probability of observing a new color, and $\nu$ is the color distribution. The above sampling scheme can be visualized as a randomly reinforced version of the Chinese restaurant process, which models the random partition generated by a \Polya sequence.

We study the limit behavior of the proposed class of randomly reinforced \Polya sequences under minimal assumptions on the reinforcement. Let $\bar{w}$ be the sup of the {\em expected} reinforcement as it varies across colors, and $\mathcal{D}\subseteq\mathbb{X}$ the set of {\em dominant} colors whose expected weight is $\bar{w}$. Let us denote the predictive and the empirical distributions of $(X_n)_{n\geq 1}$ by
$$P_n(\cdot)\equiv\mathbb{P}(X_{n+1}\in \cdot\mid X_1,W_1,\ldots,X_n,W_n)\qquad\mbox{and}\qquad\hat{P}_n(\cdot)\equiv\frac{1}{n}\sumN\delta_{X_i}(\cdot).$$
We show in \cref{results:dps:arbitrary} that, under certain conditions of compactness, the probability of observing {\em non}-dominant colors may not vanish, though their distance to $\mathcal{D}$ converges in probability to zero. If $\bar{w}$ is significantly larger than the expected reinforcement of any $x\in\mathcal{D}^c$, then there exists a random probability measure $\tilde{P}$ on $\mathbb{X}$ with $\tilde{P}(\mathcal{D})=1$ a.s. such that
\begin{equation}\label{eq:intro:totalvar}
\|P_n-\tilde{P}\|\overset{a.s.}{\longrightarrow}0\qquad\mbox{and}\qquad\|\hat{P}_n-\tilde{P}\|\overset{a.s.}{\longrightarrow}0,  
\end{equation}
where $\|\alpha-\beta\|\equiv\sup_{A\in\mathcal{X}}|\alpha(A)-\beta(A)|$ is the total variation distance between two probability measures $\alpha$, $\beta$ on $\mathbb{X}$. The latter implies, in particular, that $\tilde{P}$ is a.s. discrete on the set $\mathcal{D}$. On the other hand, one can show from \eqref{eq:intro:totalvar} that $(X_n)_{n\geq 1}$ is asymptotically exchangeable with limit directing random measure $\tilde{P}$; that is, as $n\rightarrow\infty$, $(X_{n+1},X_{n+2},\ldots)$ converges in distribution to an exchangeable sequence with directing random measure $\tilde{P}$, see \citep[][Lemma 8.2]{aldous1985}.

In addition, we provide set-wise rates of convergence for $P_n$ and $\hat{P}_n$ in the form of a central limit theorem; namely, \cref{results:clt:arbitrary:discont} gives conditions under which, for every $A\in\mathcal{X}$ such that $A\cap\mathcal{D}\neq\emptyset$,
$$\sqrt{n}\bigl(\hat{P}_n(A)-P_n(A)\bigr)\qquad\mbox{and}\qquad\sqrt{n}\bigl(P_n(A)-\tilde{P}(A)\bigr),$$
converge to non-degenerate Gaussian limits in the sense of stable convergence and a.s. conditional convergence, respectively. We illustrate the statistical use of these results by obtaining marginal asymptotic credible intervals for $\tilde{P}$ under a Bayesian approach.

The paper is organized as follows. The formal definition of the proposed \Polya sequences with dominant colors is given in Section \ref{section:model}. Their first-order convergence properties are derived in Section \ref{section:first-order}, which also includes examples of their potential use in species sampling and clinical trials. Section \ref{section:second-order} contains results on rates of convergence and central limit theorems. The proofs are postponed to Section \ref{section:proofs}.

\section{\Polya sequence with dominant colors}
\label{section:model}

Let $(\Omega,\mathcal{H},\mathbb{P})$ be a probability space, $(\mathbb{X},d)$ a complete separable metric space, endowed with its Borel $\sigma$-algebra $\mathcal{X}$, and $(X_n)_{n\geq 1}$ a sequence of $\mathbb{X}$-valued random variables on $(\Omega,\mathcal{H},\mathbb{P})$ with predictive distributions \eqref{eq:intro:rrps}. We assume that the random reinforcement in \eqref{eq:intro:rrps} is color-specific in the sense that
\begin{equation}\label{eq:def:weights}
W_n=h(X_n,U_n),
\end{equation}
where $(U_n)_{n\geq 1}$ is a sequence of i.i.d. random variables such that $U_n$ is independent of $(X_1,\ldots,X_n)$, and $h$ is a measurable function from $\mathbb{X}\times\mathbb{R}$ into $\mathbb{R}_+$.

Let us define the expected weight of color $x\in\mathbb{X}$ by
$$w(x)=\mathbb{E}[W_n|X_n=x],$$
and denote
$$\bar{w}=\sup_{x\in\mathbb{X}}w(x).$$
We further let $\mathcal{F}_0=\{\Omega,\emptyset\}$, $\mathcal{F}_n=\sigma(X_1,U_1,\ldots,X_n,U_n)$, $n\geq 1$, and
\begin{gather*}
\mathcal{D}=\{x\in\mathbb{X}:w(x)=\bar{w}\},\qquad
\mathcal{D}_\delta=\{x\in\mathbb{X}:d(x,\mathcal{D})<\delta\},
\end{gather*}
for every $\delta>0$, where $d(x,\mathcal{D})\equiv\inf\{d(x,y):y\in\mathcal{D}\}$ denotes the distance of $x$ from $\mathcal{D}$. By construction, $w$ is $\mathcal{X}$-measurable, so $\mathcal{D}\in\mathcal{X}$, and we call $\mathcal{D}$ the set of \emph{dominant} colors. We make the following assumptions:
\begin{equation}\label{eq:def:condition}
\begin{aligned}
&0\leq W_n\leq\beta\mbox{ for some constant }\beta;\\
&\bar{w}\in\mbox{supp}(\nu_w);\\
&\bar{w}>\sup_{x\in\mathcal{D}_\delta^c}w(x)\textnormal{ for every }\delta>0,
\end{aligned}
\end{equation}
where $\nu_w$ is the image measure of $\nu$ under $w$, and $\mbox{supp}(\nu_w)$ is the support of $\nu_w$ on $\mathbb{R}_+$, defined by
$$\mbox{supp}(\nu_w)=\{u\geq0:\forall\epsilon>0,\nu_w((u-\epsilon,u+\epsilon))>0\}.$$
The assumptions \eqref{eq:def:condition} essentially require that the colors $x$, for which the expected weight $w(x)$ is arbitrarily close to $\bar{w}$, lie in a compact set that we are able to sample from. If $\mathcal{D}$ is non-empty and $\nu(\mathcal{D})>0$, then $\bar{w}\in\mbox{supp}(\nu_w)$ is automatically satisfied, in which case we shall be interested in the relationship between $\bar{w}$ and the next highest value in the support of $\nu_w$,
$$\bar{w}^c=\sup\{u\geq0:u\in\mbox{supp}((\nu_{|\mathcal{D}^c})_w)\},$$
where $\nu_{|\mathcal{D}^c}(\cdot)=\nu(\cdot\,|\mathcal{D}^c)$. We call any process characterized by \eqref{eq:intro:rrps} with \eqref{eq:def:weights} a {\em dominant \Polya sequence} (DPS) if it satisfies the assumptions \eqref{eq:def:condition}.

\section{First-order convergence}
\label{section:first-order}

We first consider the DPS with a constant expected weight function, $w(x)\equiv\bar{w}$. Then all colors are dominant and one can show that $W_n$ and $X_n$ are conditionally uncorrelated given $\mathcal{F}_{n-1}$. Recall from Section \ref{section:intro} that $(X_n)_{n\geq 1}$ is c.i.d. whenever $W_n$ and $X_n$ are conditionally independent. In that case, general c.i.d. theory implies that the predictive and empirical distributions converge set-wise to the same random limit; see \citep[][Section 2]{berti2004}. Here, $(X_n)_{n\geq 1}$ is generally not c.i.d., yet $P_n$ and $\hat{P}_n$ converge to a common random probability measure, and convergence is in total variation.

\begin{proposition}
\label{results:dps:all}
Assume $\mathbb{X}=\mathcal{D}$. Then there exists a random probability measure $\tilde{P}$ on $\mathbb{X}$ 
such that
$$\|P_n-\tilde{P}\|\overset{a.s.}{\longrightarrow}0\qquad\mbox{and}\qquad\|\hat{P}_n-\tilde{P}\|\overset{a.s.}{\longrightarrow}0.$$
\end{proposition}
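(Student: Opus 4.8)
The plan is to reduce the claim to a known convergence result for conditionally identically distributed (c.i.d.) sequences by constructing, on a suitable enlarged space, an auxiliary urn process whose color-drawing mechanism does \emph{not} depend on the weights, and then to show that this auxiliary process has the same predictive dynamics as $(X_n)$. Concretely, since $w(x)\equiv\bar w$ on $\mathbb{X}=\mathcal{D}$, the reinforcement $W_n=h(X_n,U_n)$ is conditionally uncorrelated with $X_n$ given $\mathcal{F}_{n-1}$; the point is to upgrade ``uncorrelated'' to a genuine distributional statement about $P_n$. I would first establish that the sequence of predictive distributions $(P_n)_{n\geq 1}$, taken at each fixed $A\in\mathcal{X}$, is a bounded martingale with respect to $(\mathcal{F}_n)$: writing $P_{n+1}(A)-P_n(A)$ explicitly from \eqref{eq:intro:rrps}, the increment is a constant (depending on $\mathcal{F}_n$ and on $n$, $\sum W_j$) times $\bigl(\ind{X_{n+1}\in A}-P_n(A)\bigr)W_{n+1}$ minus a correction, and taking $\mathbb{E}[\,\cdot\mid\mathcal{F}_n]$ the key term is $\mathbb{E}[(\ind{X_{n+1}\in A}W_{n+1})\mid\mathcal{F}_n]=\mathbb{E}[\mathbb{E}[W_{n+1}\mid X_{n+1},\mathcal{F}_n]\ind{X_{n+1}\in A}\mid\mathcal{F}_n]=\bar w\,P_n(A)$, using $w\equiv\bar w$ and the independence of $U_{n+1}$ from $(X_1,\dots,X_{n+1})$. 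Hence the martingale property holds pointwise in $A$.

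Given that $(P_n(A))_n$ is a bounded martingale for every $A$, it converges a.s.\ to a limit $\tilde P(A)$; by a standard argument (using a countable generating algebra for $\mathcal{X}$ and the completeness/separability of $\mathbb{X}$, plus regularity of the predictive kernels) the limit can be realized as a random probability measure $\tilde P$ on $(\mathbb{X},\mathcal{X})$, and $P_n(A)\to\tilde P(A)$ a.s.\ simultaneously for all $A$ in the generating algebra. This is exactly the structure of a c.i.d.\ sequence's predictive sequence, so one can invoke (or re-derive, following \cite{berti2004,bassetti2010}) the fact that $\hat P_n(A)\to\tilde P(A)$ a.s.\ as well, via the representation $\hat P_n(A)-P_n(A)=\frac1n\sum_{i=1}^n\bigl(\ind{X_i\in A}-P_{i-1}(A)\bigr)+o(1)$ and an $L^2$ or Azuma-type bound on the martingale $\sum_{i}(\ind{X_i\in A}-P_{i-1}(A))/i$; boundedness of the $W_n$ by $\beta$ is what keeps all the relevant increments uniformly controlled. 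So far this gives set-wise a.s.\ convergence of both $P_n$ and $\hat P_n$ to $\tilde P$.

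To upgrade set-wise convergence to \emph{total variation} convergence, I would exploit the discreteness forced by the reinforcement mechanism. The predictive distribution $P_n$ is a mixture of $\nu$ with weight $\theta/(\theta+\sum W_j)\to 0$ a.s.\ (since $W_j\le\beta$ would give only $O(1/n)$, but in fact $\sum W_j$ grows linearly a.s.\ because $\mathbb{E}[W_{n+1}\mid\mathcal{F}_n]=\bar w>0$ — here one needs $\bar w\in\operatorname{supp}(\nu_w)$ to rule out degeneracy, i.e.\ to guarantee $\bar w>0$, or more carefully a law-of-large-numbers argument for $\frac1n\sum W_j$), and an atomic part supported on $\{X_1,\dots,X_n\}$. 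Decomposing $\nu=\nu_a+\nu_d$ into atomic and diffuse parts, the diffuse mass that $P_n$ can place anywhere is at most $\theta\,\nu_d(\mathbb{X})/(\theta+\sum W_j)\to 0$, so $P_n$ is ``asymptotically purely atomic'' and its atoms accumulate on a countable set. On a fixed countable set $S$ containing all atoms of $\nu$ and all the $X_i$, set-wise convergence $P_n(\{x\})\to\tilde P(\{x\})$ for each $x\in S$, combined with the total masses converging to $1$, upgrades to $\sum_{x\in S}|P_n(\{x\})-\tilde P(\{x\})|\to 0$ by Scheffé's lemma, which is precisely $\|P_n-\tilde P\|\to0$; and since $\|\hat P_n-\tilde P\|\le\|\hat P_n\text{'s atomic discrepancy}\|$ and $\hat P_n$ is already purely atomic on $\{X_i\}$, the same Scheffé argument handles $\hat P_n$.

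The main obstacle I anticipate is the last step — passing from set-wise (weak-type) a.s.\ convergence to total-variation a.s.\ convergence — because Scheffé's lemma needs a common dominating countable carrier and one must be careful that the exceptional null sets (one per $x\in S$, one for each total-mass statement) assemble into a single null set; the countability of $S$ makes this work, but it relies essentially on $\nu$ having at most countably many atoms and on the diffuse part of the predictive mass vanishing, which in turn is where the linear growth of $\sum W_j$ (hence an LLN using the bound $W_n\le\beta$ and $\mathbb{E}[W_n\mid\mathcal{F}_{n-1}]=\bar w$) is indispensable. A secondary subtlety is constructing $\tilde P$ as a bona fide random measure rather than just a consistent family of random numbers; this is routine given separability but should be stated carefully.
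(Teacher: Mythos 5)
Your central claim in the first step --- that $(P_n(A))_{n}$ is a bounded $(\mathcal{F}_n)$-martingale --- is false, and the paper explicitly warns against it (``$(P_n)_{n\geq 1}$ may not be a measure-valued martingale''). Writing the increment exactly, one gets
\begin{equation*}
P_{n+1}(A)-P_n(A)=\frac{W_{n+1}}{N_n+W_{n+1}}\bigl(\delta_{X_{n+1}}(A)-P_n(A)\bigr),\qquad N_n=\theta+\sum_{j=1}^n W_j ,
\end{equation*}
and the conditional expectation given $\mathcal{F}_n$ involves $\mathbb{E}\bigl[\tfrac{W_{n+1}}{N_n+W_{n+1}}\,\delta_{X_{n+1}}(A)\,\big|\,\mathcal{F}_n\bigr]=\int_A \mathbb{E}\bigl[\tfrac{h(x,U)}{N_n+h(x,U)}\bigr]\,P_n(dx)$. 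The map $x\mapsto \mathbb{E}\bigl[\tfrac{h(x,U)}{N_n+h(x,U)}\bigr]$ depends on the whole conditional law of $W$ given $X=x$, not only on its mean; the hypothesis $\mathbb{X}=\mathcal{D}$ only fixes the mean $w(x)\equiv\bar w$, so this map need not be constant and the conditional increment need not vanish (a two-point versus degenerate reinforcement distribution with the same mean already breaks it, by Jensen applied to the concave map $w\mapsto w/(N_n+w)$). Consequently you cannot invoke martingale convergence for $P_n(A)$, nor the c.i.d.\ machinery of \cite{berti2004,bassetti2010}, which is exactly the ``correction'' you wave away. The repair --- and the route the paper takes --- is to bound the conditional increment by $\beta^2 P_n(A)P_n(A^c)N_n^{-2}$ and the conditional second moment by $\beta^2 N_{n-1}^{-2}$, prove $\sum_n N_n^{-2}<\infty$ a.s.\ (via the law of large numbers $N_n/n\to\bar w$, which the paper isolates as a lemma with an exponential tail bound, also needed in $L^p$ for the measure-extension step), and then apply the almost-supermartingale convergence result \citep[Lemma 3.2]{pemantle1999}. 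So the conclusion $P_n(A)\to\tilde P(A)$ a.s.\ is salvageable, but not by the argument you give.

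The remainder of your plan is essentially the paper's: the martingale strong law (here \citep[Lemma 2]{berti2011}, which needs only the martingale-difference structure of $\delta_{X_i}(A)-P_{i-1}(A)$ and not c.i.d.) gives $\hat P_n(A)-P_n(A)\to 0$, and the total-variation upgrade goes through Scheff\'e on the countable carrier $S=\{X_1,X_2,\dots\}$ together with $\theta/N_n\to 0$ killing the $\nu$-part. Two details you should not gloss over there: (i) assembling the set-wise limits $\tilde p_A$ into a genuine random probability measure requires an interchange of limit and countable sum (the paper does this with a Weierstrass M-test using $\mathbb{E}[N_n^{-2}]=O(n^{-2})$, then cites a measure-extension theorem), and (ii) one must check that the atom masses actually sum to one, i.e.\ $\sum_m \tilde P(\{X_m\})=1$ a.s., which again uses the same interchange; Portmanteau alone only gives one-sided inequalities.
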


The next result concerns the opposite extreme case of a strictly increasing $w$ function on $\mathbb{X}=[0,1]$. Then ``$1$'' is the only dominant color and we show that the predictive and the empirical distributions converge weakly with probability one to a probability measure degenerate at ``$1$''. Even so, the probability of drawing a {\em non}-dominant color may not vanish.

\begin{proposition}
\label{results:dps:mon}
Let $\mathbb{X}=[0,1]$ and $(X_n)_{n\geq 1}$ be a DPS with a continuous and strictly increasing expected weight function $w$. Then, as $n\rightarrow\infty$, $\frac{1}{n}\sumN W_i\overset{a.s.}{\longrightarrow}w(1)$ and
$$P_n\overset{w}{\longrightarrow}\delta_1\quad\mbox{a.s.}[\mathbb{P}]\qquad\mbox{and}\qquad\hat{P}_n\overset{w}{\longrightarrow}\delta_1\quad\mbox{a.s.}[\mathbb{P}].$$
Moreover, there exists a $[0,1]$-valued random variable $\eta$ such that $P_n(\{1\}^c)\overset{a.s.}{\longrightarrow}\eta$.
\end{proposition}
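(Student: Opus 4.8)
The plan is to reduce everything to the single statement
$$\int w\,dP_n\ \overset{a.s.}{\longrightarrow}\ \bar w .$$
Granting this: (i) since $\bar w-w\geq0$ is continuous on $[0,1]$ and vanishes only at $1$, from $\int(\bar w-w)\,dP_n\to0$ one gets $P_n([0,1-\epsilon])\leq\bigl(\bar w-w(1-\epsilon)\bigr)^{-1}\int(\bar w-w)\,dP_n\to0$ for every $\epsilon>0$, hence $P_n\overset{w}{\longrightarrow}\delta_1$ by compactness of $[0,1]$; (ii) $\sum_{i\leq n}(W_i-\int w\,dP_{i-1})$ and $\sum_{i\leq n}(\ind{X_i\in B}-P_{i-1}(B))$ are martingales with increments bounded by $\beta$ and $1$, so the SLLN for martingales together with Cesàro convergence give $\frac1n\sum_{i\leq n}W_i\to\bar w$ and $\hat P_n(B)-\frac1n\sum_{i\leq n}P_{i-1}(B)\to0$ a.s., whence also $\hat P_n\overset{w}{\longrightarrow}\delta_1$ a.s. The existence of $\eta$ is obtained separately at the end.

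For the Main Claim I would first record two preliminaries. Because $\bar w\in\mathrm{supp}(\nu_w)$ and $\bar w=w(1)>0$, the set $\{w>\bar w/2\}$ has positive $\nu$-measure, so $\mathbb{E}[W_{n+1}\mid\mathcal F_n]=\int w\,dP_n\geq\theta\int w\,d\nu/S_n=:c_0/S_n$ with $c_0>0$; combined with the dichotomy $\{\sum W_n<\infty\}=\{\sum\mathbb{E}[W_n\mid\mathcal F_{n-1}]<\infty\}$ for adapted $[0,\beta]$-valued sequences, this rules out $S_\infty<\infty$, so $S_n\uparrow\infty$ a.s., and a quantitative refinement of the same estimate gives $\sum_n S_n^{-2}<\infty$ a.s. Next, writing $P_{n+1}$ in terms of $P_n,X_{n+1},W_{n+1}$ and conditioning first on $\mathcal F_n$ and then on $X_{n+1}$, one obtains, with $\psi_n(x):=\mathbb{E}[h(x,U_{n+1})/(S_n+h(x,U_{n+1}))]$,
$$\mathbb{E}\Bigl[\int w\,dP_{n+1}\,\Big|\,\mathcal F_n\Bigr]-\int w\,dP_n=\mathrm{Cov}_{P_n}(w,\psi_n)\ \geq\ \frac{\mathrm{Var}_{P_n}(w)}{S_n}-\frac{\beta^2\bar w}{S_n^{2}},$$
using $0\leq w(x)/S_n-\psi_n(x)\leq\beta^2/S_n^{2}$ together with Cauchy--Schwarz. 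Thus $\int w\,dP_n\in[0,\bar w]$ is an almost-submartingale, so by the Robbins--Siegmund theorem (using $\sum S_n^{-2}<\infty$) it converges a.s. to some $W^*\leq\bar w$ and $\sum_n S_n^{-1}\mathrm{Var}_{P_n}(w)<\infty$ a.s.; by (ii), also $S_n/n\to W^*$ a.s.

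It then remains to show $W^*=\bar w$ a.s. Suppose not; then $\mathbb{P}(W^*<u)>0$ for some rational $u<\bar w$, and since $\bar w\in\mathrm{supp}(\nu_w)$ the set $C:=\{w\geq(u+\bar w)/2\}$ satisfies $\nu(C)>0$ and $w\geq u_*:=(u+\bar w)/2>u$ on $C$. Let $Z_n:=S_nP_n(C)=V_n(C)+\theta\nu(C)$ be the mass accrued on $C$; it is non-decreasing, $Z_0>0$, $Z_n\uparrow\infty$ a.s. (same dichotomy), and $\mathbb{E}[Z_{n+1}-Z_n\mid\mathcal F_n]=\int_C w\,dP_n\geq u_*Z_n/S_n$. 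On $\{W^*<u\}$ one has $S_n\leq vn$ eventually for any fixed rational $v\in(W^*,u)$; comparing $\log Z_n$ with $\log S_n$ (lower-bounding the drift of $\log Z_n$ by $(u_*/S_n)(1+o(1))$ via $\log(1+x)\geq x/(1+x)$, and upper-bounding the drift of $\log S_n$ by $(\int w\,dP_n)/S_n\to1/n$ via $\log(1+x)\leq x$) shows that on this event $\log Z_n-\log S_n$ has drift at least $\rho/n+o(1/n)$ for some $\rho>0$, forcing $P_n(C)=Z_n/S_n\to\infty$, which is impossible. Making this rigorous requires handling the martingale parts — done by stopping at the first time after a large deterministic $m$ that $S_n$ exceeds $vn$ and letting $m\to\infty$ — and is the infinite-color counterpart of the dominance phenomenon for randomly reinforced urns \cite{berti2010,may2009,muliere2006,zhang2014}: a fixed $\nu$-charged set of near-optimal colors keeps receiving fresh mass from $\nu$ and is reinforced at the strictly larger rate $u_*>W^*$, so its share cannot vanish. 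This contradiction gives $W^*=\bar w$ a.s. Finally, a computation parallel to the one above gives $\mathbb{E}[P_{n+1}(\{1\})\mid\mathcal F_n]-P_n(\{1\})=P_n(\{1\})\int(\psi_n(1)-\psi_n(x))\,P_n(dx)\geq-\beta^2 P_n(\{1\})/S_n^{2}$, since $\psi_n(1)-\psi_n(x)\geq(w(1)-w(x))/S_n-\beta^2/S_n^{2}\geq-\beta^2/S_n^{2}$; so $P_n(\{1\})$ is a bounded almost-submartingale and, by Robbins--Siegmund and $\sum_n S_n^{-2}<\infty$, converges a.s., and $\eta:=1-\lim_n P_n(\{1\})$ is the desired $[0,1]$-valued random variable with $P_n(\{1\}^c)\overset{a.s.}{\longrightarrow}\eta$. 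The main obstacle is precisely the identification $W^*=\bar w$: the soft martingale arguments only yield convergence of $\int w\,dP_n$ and $P_n(\{1\})$, and pinning the first limit at $\bar w$ requires the quantitative $\nu$-leak comparison above, whose careful execution (and the attendant a.s. growth estimate $\sum_n S_n^{-2}<\infty$) is the delicate part.
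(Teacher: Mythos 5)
Your route is genuinely different from the paper's and most of it is sound, but the linchpin of your Robbins--Siegmund step --- the a.s.\ summability $\sum_n S_n^{-2}<\infty$ (in your notation $S_n=\theta+\sum_{j\leq n}W_j$, the paper's $N_n$) --- is not justified by what you offer, and this is a genuine gap. The estimate $\mathbb{E}[W_{n+1}\mid\mathcal{F}_n]\geq c_0/S_n$ gives, after the submartingale computation for $S_n^2$, only $S_n\gtrsim\sqrt{n}$, and for a sequence of that order $\sum_n S_n^{-2}$ behaves like $\sum_n n^{-1}=\infty$. No ``quantitative refinement of the same estimate'' closes this: even upgrading to $S_n^2/n\to\infty$ (which does follow, since $N_n(\{w>\bar{w}/2\})\to\infty$) still permits $S_n^{-2}\sim(n\log n)^{-1}$, which is not summable. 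The clean way to get summability is $S_n/n\to\bar{w}>0$, but that is a consequence of the very statement you are using Robbins--Siegmund to prove, so as written the argument is circular at this point. The step is repairable in at least two ways: (a) sharpen the error term in the drift of $\int w\,dP_n$ --- bounding $\mathrm{Cov}_{P_n}(w,\psi_n-w/S_n)$ by Cauchy--Schwarz as $\sqrt{\mathrm{Var}_{P_n}(w)}\cdot\beta\bar{w}/S_n^2$ and absorbing half of it into the variance gain via $\sqrt{V}\,c/S_n^2\leq V/(2S_n)+c^2/(2S_n^3)$ --- so that only $\sum_n S_n^{-3}<\infty$ is needed, which \emph{does} follow from $S_n\gtrsim\sqrt{n}$; or (b) bootstrap: a log-drift argument gives $N_n(\{w>\bar{w}/2\})\gtrsim n^{c}$ for some $c>0$, whence $S_n^2\gtrsim n^{1+c}$ and $\sum_n S_n^{-2}<\infty$. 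Either patch should be written out; without one, the convergence of $\int w\,dP_n$ is not established. (Your final step for $\eta$ is unaffected, since by then $S_n/n\to\bar{w}$ is available.)

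Modulo that repair, the rest checks out and is a legitimately different proof. The drift identity $\mathbb{E}[\int w\,dP_{n+1}\mid\mathcal{F}_n]-\int w\,dP_n=\mathrm{Cov}_{P_n}(w,\psi_n)$ is correct, the reduction of weak convergence to $\int w\,dP_n\to\bar{w}$ is clean, and the leak-comparison contradiction identifying $W^*=\bar{w}$ (comparing the drifts of $\log N_n(C)$ and $\log S_n$ on $\{W^*<u\}$, with stopping-time localization) is the right infinite-color analogue of the RRU dominance arguments and works because $\nu(C)>0$ keeps $N_n(C)$ bounded below. The paper instead proves \cref{appendix:mon:lemma2}: for each $t$ it exhibits $s>t$ and $\lambda<1$ calibrated to the ratio $w(t)/w(s)$ so that $N_n[0,t]/N_n(s,1]^\lambda$ is eventually a supermartingale; this needs no a priori growth rate on $S_n$ at all, which is precisely the difficulty your approach runs into. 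On the other hand, your argument for the existence of $\eta$ --- showing $P_n(\{1\})$ is a bounded almost-submartingale with negative part $O(S_n^{-2})$ --- is appreciably simpler than the paper's Part II, which goes through \cref{appendix:mon:lemma3}, the polynomial estimates $M_n(\{1\})^\psi/M_n(\{1\}^c)\to0$, and a $\log$-ratio argument via Pemantle's lemma.
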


The sampling scheme implied by a strictly increasing $w$ can be described as follows: colors are picked and reinforced until a new color closer to ``$1$'' is drawn. This color is then increasingly preferred by the model until a better alternative appears, which happens almost surely. In fact, as we will show in \cref{results:clt:arbitrary:rate}, \textit{non}-dominant colors are forgotten quicker the more distant they are from ``$1$''.

Given \cref{results:dps:all,results:dps:mon}, we are ready to state our main result of this section, which involves a DPS with an arbitrary expected weight function $w(\cdot)$ and a general space of colors $\mathbb{X}$. We show that, as $n$ increases, the observations tend to concentrate around the set of dominant colors. If further $\bar{w}>\bar{w}^c$, then the probability of observing {\em non}-dominant colors converges a.s. to zero.

\begin{theorem}\label{results:dps:arbitrary}
For any DPS $(X_n)_{n\geq 1}$, as $n\rightarrow\infty$, $\frac{1}{n}\sumN W_i\overset{a.s.}{\longrightarrow}\bar{w}$ and
$$P_n(\mathcal{D}_\delta^c)\overset{a.s.}{\longrightarrow}0\qquad\mbox{for every }\delta>0.$$
Moreover, there exists a $[0,1]$-valued random variable $\eta$ such that
$$P_n(\mathcal{D}^c)\overset{a.s.}{\longrightarrow}\eta.$$
If, in addition, it holds that $\bar{w}>\bar{w}^c$, then $\eta=0$ and there exists a random probability measure $\tilde{P}$ on $\mathbb{X}$ with $\tilde{P}(\mathcal{D})=1$ a.s.$[\mathbb{P}]$ such that
$$\|P_n-\tilde{P}\|\overset{a.s.}{\longrightarrow}0\qquad\mbox{and}\qquad\|\hat{P}_n-\tilde{P}\|\overset{a.s.}{\longrightarrow}0.$$
\end{theorem}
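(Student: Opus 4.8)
\emph{Setup and the easy bound.} Write $T_n=\theta+\sum_{i=1}^nW_i$, so that $P_n(A)=T_n^{-1}\bigl(\sum_{i=1}^nW_i\ind{X_i\in A}+\theta\nu(A)\bigr)$ for $A\in\mathcal{X}$, and put $b_n^A:=T_nP_n(A)$, which is nondecreasing in $n$. Since $U_{n+1}$ is independent of $(\mathcal{F}_n,X_{n+1})$, $\mathbb{E}[W_{n+1}\ind{X_{n+1}\in A}\mid\mathcal{F}_n]=\int_Aw\,dP_n$; in particular $\mathbb{E}[W_{n+1}\mid\mathcal{F}_n]=\int w\,dP_n\le\bar{w}$ and $\mathbb{E}[b_{n+1}^A\mid\mathcal{F}_n]=b_n^A+\int_Aw\,dP_n$. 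The easy half of the first assertion follows at once: $\sum_{i=1}^n\bigl(W_i-\int w\,dP_{i-1}\bigr)$ is an $(\mathcal{F}_n)$-martingale with increments in $[-\beta,\beta]$, hence $o(n)$ a.s.\ by the strong law for martingales, so $\limsupN\tfrac1n\sumN W_i\le\bar{w}$; moreover $\mathbb{E}[T_{n+1}-T_n\mid\mathcal{F}_n]\ge\tfrac{\theta}{T_n}\int w\,d\nu$ with $\int w\,d\nu>0$ (from $\bar{w}\in\mathrm{supp}(\nu_w)$; the case $\bar{w}=0$ is trivial), so a routine conditional Borel--Cantelli argument gives $T_n\to\infty$ a.s.

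\emph{The core step, and convergence on $\mathcal{D}_\delta^c$.} The heart of the proof is to upgrade this to $\tfrac1n\sumN W_i\to\bar{w}$, equivalently $\psi_n:=\int(\bar{w}-w)\,dP_n\to0$ a.s.; indeed, given $\psi_n\to0$, Ces\`aro averaging and the martingale estimate of the Setup yield $\tfrac1n\sumN W_i\to\bar{w}$ and $T_n/n\to\bar{w}$. I would argue in two moves. \emph{(i) A linear lower bound $\liminfN T_n/n>0$ a.s.} By $\bar{w}\in\mathrm{supp}(\nu_w)$, for each $\epsilon>0$ the set $G_\epsilon:=\{x:w(x)>\bar{w}-\epsilon\}$ has $\nu(G_\epsilon)>0$, so $G_\epsilon$-coloured balls are introduced afresh from $\nu$ at rate $\asymp1/T_n$ and, once present, are reinforced as in a \Polya urn; tracking this snowballing---the proportion $P_n(G_\epsilon)=b_n^{G_\epsilon}/T_n$ has only a small downward drift, of order $\epsilon/T_n$---one shows $\liminfN P_n(G_\epsilon)>0$, whence $\liminfN\tfrac1n\sumN W_i\ge(\bar{w}-\epsilon)\liminfN\tfrac1n\sum_{i=1}^nP_{i-1}(G_\epsilon)>0$. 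I expect this to be the main obstacle: $(X_n)_{n\ge1}$ is neither exchangeable nor c.i.d., so the progressive takeover of the high-$w$ colours has to be extracted directly; alternatively one may work with the image measures $P_n\circ w^{-1}$ on $[0,\bar{w}]$---whose effective weight function is the identity, continuous and strictly increasing---and reproduce the argument behind \cref{results:dps:mon}. \emph{(ii) Bootstrapping.} Once $T_n\asymp n$, fix $\epsilon>0$ and let $A=\{w\le\bar{w}-\epsilon\}$; then $\mathbb{E}[b_{n+1}^A\mid\mathcal{F}_n]\le b_n^A\bigl(1+\tfrac{\bar{w}-\epsilon}{T_n}\bigr)$, so $b_n^A\big/\prod_{j<n}\bigl(1+\tfrac{\bar{w}-\epsilon}{T_j}\bigr)$ is a nonnegative supermartingale, hence a.s.\ bounded; since $\sum_{j<n}1/T_j=\bar{w}^{-1}\log n+o(\log n)$, this gives $b_n^A\le n^{(\bar{w}-\epsilon)/\bar{w}+o(1)}$ and so $P_n(A)=b_n^A/T_n\le n^{-\epsilon/\bar{w}+o(1)}\to0$ a.s. Then $\psi_n\le\epsilon+\bar{w}P_n(A)$ forces $\limsupN\psi_n\le\epsilon$ for every $\epsilon>0$, i.e.\ $\psi_n\to0$. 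Running the same supermartingale with $A=\mathcal{D}_\delta^c$, for which $\sup_{\mathcal{D}_\delta^c}w<\bar{w}$ by \eqref{eq:def:condition}, gives $P_n(\mathcal{D}_\delta^c)\le n^{(\sup_{\mathcal{D}_\delta^c}w)/\bar{w}-1+o(1)}\to0$ a.s.---the second assertion, together with the rate behind \cref{results:clt:arbitrary:rate}.

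\emph{Convergence of $P_n(\mathcal{D}^c)$ and the case $\bar{w}>\bar{w}^c$.} Since $w\equiv\bar{w}$ on $\mathcal{D}$, a short computation---using $\phi(x,t):=\mathbb{E}\bigl[\tfrac{h(x,U)}{t+h(x,U)}\bigr]\in\bigl[\tfrac{w(x)}{t+\beta},\tfrac{w(x)}{t}\bigr]$ and $\int_{\mathcal{D}^c}w\,dP_n=\bar{w}P_n(\mathcal{D}^c)-\psi_n$---gives $\mathbb{E}[P_{n+1}(\mathcal{D}^c)\mid\mathcal{F}_n]\le P_n(\mathcal{D}^c)+\beta\bar{w}/T_n^2$; as $\sum_n1/T_n^2<\infty$ a.s.\ (because $T_n\asymp n$), $P_n(\mathcal{D}^c)-\sum_{j<n}\beta\bar{w}/T_j^2$ is a supermartingale bounded below, so $P_n(\mathcal{D}^c)$ converges a.s.\ to a $[0,1]$-valued $\eta$. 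Now suppose $\bar{w}>\bar{w}^c$. Since $0\le w\le\bar{w}$ we have $\sup\mathrm{supp}(\nu_w)=\bar{w}$, so $\bar{w}>\bar{w}^c$ forces $\nu(\mathcal{D})>0$ and $\nu(\{x\in\mathcal{D}^c:w(x)>\bar{w}^c\})=0$; as, by \eqref{eq:intro:rrps}, every $X_n$ a.s.\ avoids each $\nu$-null set, a.s.\ no $X_n$ lands in $\{x\in\mathcal{D}^c:w(x)>w^\ast\}$ with $w^\ast:=(\bar{w}+\bar{w}^c)/2$, hence $P_n(\mathcal{D}^c)=P_n(\mathcal{D}^c\cap\{w\le w^\ast\})\le P_n(\{w\le w^\ast\})$, which tends to $0$ by move (ii) applied to $\{w\le w^\ast\}$ (as $w^\ast<\bar{w}$). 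Therefore $\eta=0$, i.e.\ $P_n(\mathcal{D})\to1$ a.s.

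\emph{Total-variation convergence under $\bar{w}>\bar{w}^c$.} Since $\nu(\mathcal{D})>0$, $\sum_{i\le n}W_i\ind{X_i\in\mathcal{D}}>0$ eventually; let $Q_n$ and $\hat{Q}_n$ be the weighted and the unweighted occupation measures of $\mathcal{D}$. From $P_n(\mathcal{D}^c)\to0$ and $T_n\to\infty$ one gets $\|P_n-Q_n\|\to0$; and, by the martingale argument of the Setup, $\hat{P}_n(\mathcal{D}^c)=P_n(\mathcal{D}^c)+o(1)\to0$, hence $\|\hat{P}_n-\hat{Q}_n\|\le 2\hat{P}_n(\mathcal{D}^c)\to0$. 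On $\mathcal{D}$ the reinforcement is colour-blind ($w\equiv\bar{w}$), so $W_n$ and $X_n$ are conditionally uncorrelated given $\mathcal{F}_{n-1}$ and, up to the asymptotically negligible $\mathcal{D}^c$-excursions, $(Q_n)$ and $(\hat{Q}_n)$ evolve as in the all-dominant setting; adapting the proof of \cref{results:dps:all} then produces a common random probability measure $\tilde{P}$ on $\mathcal{D}$---a.s.\ discrete---with $\|Q_n-\tilde{P}\|\to0$ and $\|\hat{Q}_n-\tilde{P}\|\to0$ a.s. Combining these estimates yields $\|P_n-\tilde{P}\|\to0$ and $\|\hat{P}_n-\tilde{P}\|\to0$ a.s., with $\tilde{P}(\mathcal{D})=1$ by construction, which finishes the proof.
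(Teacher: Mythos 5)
Your architecture matches the paper's (reduce everything to the constant-weight case on $\mathcal{D}$ via \cref{results:dps:all}, after showing the low-$w$ colours are asymptotically negligible), and several downstream pieces are sound: the almost-supermartingale argument for the a.s.\ convergence of $P_n(\mathcal{D}^c)$, the observation that $\bar w>\bar w^c$ forces $\nu(\mathcal{D})>0$ and $w\le\bar w^c$ $\nu$-a.e.\ on $\mathcal{D}^c$, and the reduction of the total-variation claim to the process restricted to $\mathcal{D}$ (though there you still owe the stopping-time construction of the paper's Part III, which verifies that $P_{T_n-1}(\cdot\,|\mathcal{D})$ is genuinely the predictive distribution of the $\mathcal{D}$-subsequence, so that \cref{results:dps:all} applies to it).

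The genuine gap is in the core step, exactly where you suspected. Move (ii) is circular as written: the supermartingale $b_n^A\big/\prod_{j<n}\bigl(1+\tfrac{\bar w-\epsilon}{T_j}\bigr)$ only yields $P_n(A)\le n^{(\bar w-\epsilon)/c_1-1+o(1)}$ when $\liminf_n T_n/n\ge c_1$, and this tends to $0$ only if $c_1>\bar w-\epsilon$; move (i) promises only $c_1>0$, while the expansion $\sum_{j<n}1/T_j=\bar w^{-1}\log n+o(\log n)$ that you invoke already requires $T_n/n\to\bar w$, which is the statement being proved. Iterating does not improve the constant. Moreover move (i) itself is only a heuristic: the downward drift of $P_n(G_\epsilon)$, of order $\epsilon/T_n$, is not summable when $T_n$ grows slowly, so the ``snowballing'' does not obviously give $\liminf_n P_n(G_\epsilon)>0$. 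The paper escapes this by comparing $N_n(A)$ not to $T_n$ but to $N_n(G)^\lambda$ for a suitable exponent $\lambda<1$ (\cref{appendix:mon:lemma2}): choosing $s$ with $\lambda w(s)>w(t)$, the ratio $N_n[0,t]/N_n(s,1]^\lambda$ becomes a supermartingale after a stopping time, and the only external input is $N_n(s,1]\to\infty$ (\cref{appendix:mon:lemma1}, via Dubins--Freedman and an extended Borel--Cantelli lemma), with no a priori growth rate for $T_n$; then $P_n[0,t]\le N_n(s,1]^{\lambda-1}\cdot N_n[0,t]/N_n(s,1]^\lambda\to0$, and only afterwards does one deduce $T_n/n\to\bar w$. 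This ratio-to-a-fractional-power comparison is the missing idea; your own fallback (transport everything to $[0,\bar w]$ through $w$ and rerun \cref{results:dps:mon}) is indeed the paper's route, but it has to be carried out rather than cited.
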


If $\bar{w}=\bar{w}^c$, then $P_n(\mathcal{D}_\delta)\overset{a.s.}{\longrightarrow} 1$  and, by \cref{results:dps:arbitrary}, $\liminf_n P_n(\mathcal{D}_\delta\backslash\mathcal{D})$ can be strictly positive. In that case, \emph{non}-dominant colors continue to be regularly observed yet get closer and closer to $\mathcal{D}$. In fact, using dominated convergence,
\begin{equation}\label{eq:first-order:probability}
d(X_n,\mathcal{D})\overset{p}{\longrightarrow}0.
\end{equation}

If $\bar{w}>\bar{w}^c$, then, in particular, $P_n\overset{w}{\longrightarrow}\tilde{P}$ a.s.$[\mathbb{P}]$ and $\hat{P}_n\overset{w}{\longrightarrow}\tilde{P}$ a.s.$[\mathbb{P}]$. It follows from a generalized martingale convergence theorem (see \citep[][Theorem 2]{blackwell1962}) that the conditional distribution of $X_{n+1}$ given $(X_1,\ldots,X_n)$ converges weakly with probability one to $\tilde{P}$. Thus, by Lemma 8.2 in \citep{aldous1985}, $(X_n)_{n\geq 1}$ is asymptotically exchangeable with limit directing random measure $\tilde{P}$.

The reinforced urn scheme \eqref{eq:intro:rrps} implies that there will be ties in a sample $(X_1, \ldots, X_n)$ of size $n$ with positive probability. Let us denote the number of distinct colors in $(X_1, \ldots, X_n)$ by 
$$L_n=\max\bigl\{k\in\{1,\ldots,n\}:X_k\notin\{X_1,\ldots,X_{k-1}\}\bigr\}.$$
The next result shows that $L_n$ is approximately $\bar{w}^{-1}\theta\log n$ for large $n$.

\begin{proposition}\label{results:dps:arbitrary:number}
For any DPS $(X_n)_{n\geq 1}$, if $\nu$ is diffuse, then, as $n\rightarrow\infty$,
$$\frac{L_n}{\log n}\overset{a.s.}{\longrightarrow}\frac{\theta}{\bar{w}}.$$
\end{proposition}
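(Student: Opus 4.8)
The plan is to reduce the statement to the asymptotics of a conditional compensator plus a negligible martingale remainder. Since $\nu$ is diffuse and $\{X_1,\dots,X_n\}$ is finite, $\nu(\{X_1,\dots,X_n\})=0$ a.s., so by \eqref{eq:intro:rrps} a previously unseen color is observed at step $n+1$ exactly when $X_{n+1}$ is drawn from the $\nu$-part of the predictive. Hence, writing $\xi_k=\ind{X_k\notin\{X_1,\dots,X_{k-1}\}}$ (so $\xi_1=1$ and the number of distinct colors is $L_n=\sum_{k=1}^n\xi_k$) and $\mathcal{G}_n=\sigma(X_1,W_1,\dots,X_n,W_n)$, one reads off from \eqref{eq:intro:rrps} the explicit conditional probabilities
\[
p_{n+1}:=\mathbb{E}[\xi_{n+1}\mid\mathcal{G}_n]=P_n\bigl(\{X_1,\dots,X_n\}^c\bigr)=\frac{\theta}{\theta+\sum_{j=1}^n W_j}\qquad\text{a.s.}
\]
Thus $L_n$ is a sum of conditionally Bernoulli variables with the completely explicit (predictable) compensator $A_n:=\sum_{k=1}^n p_k$, and it suffices to prove (i) $A_n/\log n\overset{a.s.}{\longrightarrow}\theta/\bar w$ and (ii) $M_n:=L_n-A_n=o(\log n)$ a.s.

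For step (i) I would feed in \cref{results:dps:arbitrary}, which gives $\frac1n\sum_{i=1}^n W_i\overset{a.s.}{\longrightarrow}\bar w$. (We may assume $\bar w>0$; if $\bar w=0$ then $W_n\equiv 0$, $L_n=n$, and $L_n/\log n\to\infty=\theta/\bar w$, so the claim is trivial.) On the almost-sure event on which this Cesàro convergence holds, for each $\epsilon\in(0,\bar w)$ there is $k_\epsilon$ with $(\bar w-\epsilon)k\le\sum_{j=1}^k W_j\le(\bar w+\epsilon)k$ for all $k\geq k_\epsilon$; inserting this into $p_k=\theta/(\theta+\sum_{j=1}^{k-1}W_j)$ gives $p_k\sim\theta/(\bar w k)$, so a comparison with $\sum_{k\leq n}1/k\sim\log n$ yields $A_n/\log n\to\theta/\bar w$. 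In particular $p_k\to 0$ and $A_n\to\infty$ a.s.

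For step (ii), $(M_n)$ is a $\mathcal{G}_n$-martingale with bounded increments, $|M_n-M_{n-1}|=|\xi_n-p_n|\leq 1$, and predictable quadratic variation $\langle M\rangle_n=\sum_{k=1}^n p_k(1-p_k)$. Since $p_k\to 0$ a.s., $\langle M\rangle_n\sim A_n\to\infty$ a.s., so the strong law for square-integrable martingales --- equivalently, a Kronecker-lemma argument applied to the $L^2$-bounded martingale $\sum_k(\xi_k-p_k)/(1+\langle M\rangle_k)$ --- gives $M_n/\langle M\rangle_n\overset{a.s.}{\longrightarrow}0$, whence $M_n=o(\log n)$ a.s. Combining (i) and (ii), $L_n/\log n=A_n/\log n+M_n/\log n\overset{a.s.}{\longrightarrow}\theta/\bar w$.

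The argument is short once one recognizes the linearization into compensator plus martingale, and the only point demanding a little care is inside step (i): passing from $\frac1n\sum_i W_i\to\bar w$ to $A_n\sim\frac{\theta}{\bar w}\log n$ genuinely requires the \emph{eventual} two-sided bound $(\bar w-\epsilon)k\le\sum_{j\le k}W_j\le(\bar w+\epsilon)k$, not merely a subsequential one, since atypically small partial weight sums along a sparse set of indices could otherwise inflate $A_n$; almost-sure Cesàro convergence is exactly what supplies this, after which all remaining estimates are routine.
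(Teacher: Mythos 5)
Your proof is correct and follows essentially the same route as the paper: both identify $\mathbb{P}(X_{n+1}\notin\{X_1,\ldots,X_n\}\,|\,\mathcal{F}_n)=\theta/(\theta+\sum_{j\leq n}W_j)$ using diffuseness of $\nu$, feed in $\frac{1}{n}\sumN W_i\overset{a.s.}{\longrightarrow}\bar{w}$ from \cref{results:dps:arbitrary}, and compare the resulting compensator with the harmonic sum. The only difference is that where you prove the step $L_n/A_n\to1$ by hand via the compensator-plus-martingale decomposition and the martingale strong law, the paper simply invokes the Dubins--Freedman conditional Borel--Cantelli theorem, which packages exactly that argument.
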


\subsection{Examples}

We close this section with a few examples and hints of potential areas of application.

\begin{example}[$k$-color RRU]\label{example:k-color:conv}
Let $F_1,\ldots F_k$ be probability distribution functions on $[0,\beta]$. Denote $w_j=\int tdF_j(t)$, $j=1,\ldots,k$, and suppose that, for some $k_0\in\{1,\ldots,k\}$,
\begin{equation}\label{eq:example:k-color:weights}
w_1=\cdots=w_{k_0}=\bar{w}>\bar{w}^c\equiv\max_{k_0<j\leq k}w_j.
\end{equation}
The $k$-color RRU is a DPS that associates $W_n|\{X_n=j\}\sim F_j$ to color $j=1,\ldots,k$. Then $w(j)=w_j$, $j=1,\ldots,k$ and $\mathcal{D}=\{1,\ldots,k_0\}$. RRUs with $k_0=1$ and $k_0=k$ have been studied by \cite{may2009,muliere2006}, while \cite{berti2010,zhang2014} consider the case $1<k_0<k$, although \cite{berti2010} assumes \eqref{eq:example:k-color:weights} only asymptotically.

Denote by $P_n(\cdot\,|\mathcal{D})=P_n(\cdot\cap\mathcal{D})\bigl/P_n(\mathcal{D})$ the predictive distribution of $X_{n+1}$ restricted to $\mathcal{D}$. It follows from \cref{results:dps:all} that $P_n(\{j\}|\mathcal{D})\overset{a.s.}{\longrightarrow}\tilde{p}_j$, $j=1,\ldots,k_0$, for some non-negative random variables $\tilde{p}_1,\ldots,\tilde{p}_{k_0}$ such that $\sum_{j=1}^{k_0}\tilde{p}_j=1$ a.s.$[\mathbb{P}]$. As $P_n(\mathcal{D})\overset{a.s.}{\longrightarrow}1$ by \cref{results:dps:arbitrary}, then
$$P_n(\{j\})=P_n(\mathcal{D})P_n(\{j\}|\mathcal{D})+P_n(\mathcal{D}^c)P_n(\{j\}|\mathcal{D}^c)\overset{a.s.}{\longrightarrow}\tilde{p}_j.$$
Therefore, $\|P_n-\tilde{P}\|\overset{a.s.}{\longrightarrow}0$ and, similarly, $\|\hat{P}_n-\tilde{P}\|\overset{a.s.}{\longrightarrow}0$, where $\tilde{P}=\sum_{j=1}^{k_0}\tilde{p}_j\delta_{j}$.
\end{example}

\begin{example}[Unimodal DPS]\label{example:clinical-trials}
Suppose that we have a DPS with a unique dominant color, \textit{i.e.}, $\mathcal{D}=\{x_0\}$ for some $x_0\in\mathbb{X}$. It follows from \eqref{eq:first-order:probability} that
$$X_n\overset{p}{\longrightarrow}x_0.$$
Moreover,
$$P_n\overset{w}{\longrightarrow}\delta_{x_0}\qquad\mbox{a.s.}[\mathbb{P}].$$
Indeed, for every open set $G\in\mathcal{X}$, we have that, if $x_0\in G$, then there exists $\delta>0$ such that $x_0\in\mathcal{D}_\delta\subseteq G$; therefore, by \cref{results:dps:arbitrary},
$$\liminfN P_n(G)\geq\liminfN P_n(\mathcal{D}_\delta)=1=\delta_{x_0}(G)\qquad\mbox{a.s.}[\mathbb{P}].$$
As $\delta_{x_0}(G)=0$ otherwise, a.s. weak convergence of $P_n$ to $\delta_{x_0}$ follows.

Unimodal DPSs can be used in the context of response-adaptive randomization procedures for clinical trials (see \cite{rosenberger2016} for a general reference) to provide an ethical design that can deal with an {\em unbounded} number or even a {\em continuum} of potential treatments. Consider a stylized trial for determining the optimal dose of a drug. Let $X_n$ represent the dose given to the $n$th patient who enters the trial, and denote by $W_n$ the patient's response to it. Then the reinforced predictive scheme \eqref{eq:intro:rrps} with a unimodal $w(\cdot)$ function provides a response-adaptive rule for selecting the dose for the next patient. It follows from \cref{results:dps:arbitrary} that such a design is ethical in the sense that, for any $\delta >0$, $P_n(\mathcal{D}_\delta)\overset{a.s.}{\longrightarrow}1$; that is, even in situations where it is practically impossible to select $x_0$ exactly (e.g., $\nu(\{x_0\})=0$ with $\nu$ is diffuse), the probability of assigning the next patient a dose arbitrarily close to $x_0$ converges a.s. to one.
\end{example}

\begin{example}[Species sampling model with dominant species]
Exchangeable species sampling models are of interest in many fields and, in particular, in the broad area of Bayesian nonparametrics (see \citep{pitman1996}), yet forms of competition between species can easily break the symmetry imposed by exchangeability. Dominant \Polya sequences can thus provide an extension to the framework of species sampling models by addressing situations where exchangeability holds only asymptotically {\em and} a restricted set of dominant species tends to prevail. Another related direction of application is in random graph theory, where conciliating between exchangeability and graph density is a challenging issue (see \cite{caron2017}), and dominant \Polya sequences can suggest `sparse' and asymptotically exchangeable graph structures. 

Consider again the sampling scheme \eqref{eq:intro:rrps}. In this case, interest lies on the {\em categorical} species and $X_n$ plays the role of a label chosen, say, in $\mathbb{X}=[0,1]$ from a uniform $\nu$. Take $0<w_1<w_2<\infty$ and $p\in(0,1)$. Let $W_n=w(X_n)$ where
$$w(x)=w_1\cdot\mathbbm{1}_{[0,p)}(x)+w_2\cdot\mathbbm{1}_{[p,1]}(x)\quad\mbox{for }x\in\mathbb{X}.$$
Then $(X_n)_{n\geq 1}$ is a DPS with $\mathcal{D}=[p,1]$. According to this scheme, when a {\em new} species appears, it has probability $p$ to be {\em non}-dominant and $(1-p)$ to be dominant, independently over $n$. As $p>0$, then $\sumN\delta_{X_i}(\mathcal{D}^c)=\infty$ a.s.$[\mathbb{P}]$ and we discover {\em non}-dominant species infinitely often. However, $\hat{P}_n([p,1])\overset{a.s.}{\longrightarrow}1$ by \cref{results:dps:arbitrary}, so the discovery rate of {\em non}-dominant species is of order less than $n$ (see also \cref{results:clt:arbitrary:rate}).
\end{example}

\section{Rates and central limit results}\label{section:second-order}

Let $(X_n)_{n\geq 1}$ be a DPS such that $\bar{w}>\bar{w}^c$, and let $\tilde{P}$ be a random probability measure as in \cref{results:dps:arbitrary}. It follows for every $A\in\mathcal{X}$ that
$$P_n(A)-\tilde P(A)\overset{a.s.}{\longrightarrow}0\qquad\mbox{and}\qquad\hat{P}_n(A)-P_n(A)\overset{a.s.}{\longrightarrow}0.$$
We proceed to study the rate of convergence by first considering $A=\mathcal{D}^c$.

\subsection{Rate of convergence for dominated colors}

Assume $\bar{w}>\bar{w}^c$. By \cref{results:dps:arbitrary}, both $\hat{P}_n(\mathcal{D}^c)$ and $P_n(\mathcal{D}^c)$ converge a.s. to zero. The next result shows that $\hat{P}_n(\mathcal{D}^c)$ and $P_n(\mathcal D^c)$ are $O(n^{-(\bar{w}-\bar{w}^c)/\bar{w}})$, as $n\rightarrow\infty$.

\begin{proposition}\label{results:clt:arbitrary:rate}
Suppose $\bar{w}>\bar{w}^c$. Then there exists a random variable $\xi\in[0,\infty)$ such that, letting $\gamma=1-\bar{w}^c/\bar{w}$, as $n\rightarrow\infty$,
$$n^\gamma\cdot\hat{P}_n(\mathcal{D}^c)\overset{a.s.}{\longrightarrow}\xi\qquad\mbox{and}\qquad n^\gamma\cdot P_n(\mathcal{D}^c)\overset{a.s.}{\longrightarrow}\frac{\bar{w}^c}{\bar{w}}\xi.$$
Moreover, $n^\gamma\cdot\hat{P}_n(\mathcal{D}^c)\overset{a.s.}{\longrightarrow}\infty$ and $n^\gamma \cdot P_n(\mathcal{D}^c)\overset{a.s.}{\longrightarrow}\infty$ when $\gamma>1-\bar{w}^c/\bar{w}$.
\end{proposition}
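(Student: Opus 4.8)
The plan is to follow the non-dominant mass through the two counters $N_n=\#\{i\le n:X_i\in\mathcal{D}^c\}=n\hat{P}_n(\mathcal{D}^c)$ and $T_n=\theta\nu(\mathcal{D}^c)+\sum_{i\le n:\,X_i\in\mathcal{D}^c}W_i$, so that $P_n(\mathcal{D}^c)=T_n/(\theta+S_n)$ with $S_n=\sum_{j=1}^nW_j$. The backbone is the a.s.\ bound $w(X_i)\le\bar{w}^c$ whenever $X_i\in\mathcal{D}^c$: by the Blackwell--MacQueen structure every distinct value of $(X_n)$ first appears as an independent $\nu$-draw, conditionally on $\mathcal{D}^c$ such a draw is $\nu_{|\mathcal{D}^c}$-distributed, hence its $w$-value lies in $\mathrm{supp}((\nu_{|\mathcal{D}^c})_w)\subseteq[0,\bar{w}^c]$, and there are only countably many such draws. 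This gives the two relations used throughout, $\mathbb{E}[T_{n+1}-T_n\mid\mathcal{F}_n]=\int_{\mathcal{D}^c}w\,dP_n\le\bar{w}^c\,T_n/(\theta+S_n)$ and $\mathbb{E}[N_{n+1}-N_n\mid\mathcal{F}_n]=P_n(\mathcal{D}^c)=T_n/(\theta+S_n)$. I will use $S_n/n\to\bar{w}$ and $P_n(\mathcal{D}^c)\to0$ a.s.\ from \cref{results:dps:arbitrary}, and I assume $\nu(\mathcal{D}^c)>0$ (so $\bar{w}^c$ is defined) and $\bar{w}^c>0$, the remaining configurations being degenerate.

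First I would bootstrap the fluctuations of $S_n$. For a fixed $\rho\in(0,\gamma)$ one has $\bar{w}^c<(1-\rho)\bar{w}$, so a.s.\ eventually $\bar{w}^c/(\theta+S_n)<(1-\rho)/n$; localising at $\sigma_m=\inf\{n\ge m:\bar{w}^c/(\theta+S_n)\ge(1-\rho)/n\}$ makes $T_{n\wedge\sigma_m}\prod_{k=m}^{n-1}(1+\tfrac{1-\rho}{k})^{-1}$ a non-negative supermartingale, and since $\mathbb{P}(\sigma_m=\infty)\uparrow1$ this yields $T_n=O(n^{1-\rho})$, hence $P_n(\mathcal{D}^c)=O(n^{-\rho})$ a.s. Splitting $S_n-\bar{w}n$ into the martingale $\sum_{j\le n}(W_j-\mathbb{E}[W_j\mid\mathcal{F}_{j-1}])$ (bounded increments, hence $o(n^{1/2+\epsilon})$) and the drift $-\sum_{j\le n}\int(\bar{w}-w)\,dP_{j-1}$, which is $O(\sum_{j\le n}P_{j-1}(\mathcal{D}^c))=O(n^{1-\rho})$, I get $|S_n-\bar{w}n|=O(n^{1-\rho'})$ for some $\rho'>0$ and, crucially, $\sum_n|S_n-\bar{w}n|/n^2<\infty$ a.s.

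Next, for $c\ge0$ set $g_n^{(c)}=\prod_{k=0}^{n-1}(1+\tfrac{c}{\theta+S_k})$. From $\log(1+x)=x+O(x^2)$, $\sum_k(\theta+S_k)^{-2}<\infty$ and the summability of $|\tfrac{c}{\theta+S_k}-\tfrac{c}{\bar{w}k}|$ just obtained, I get the \emph{sharp} asymptotics $\log g_n^{(c)}=\tfrac{c}{\bar{w}}\log n+O(1)$, i.e.\ $n^{-c/\bar{w}}g_n^{(c)}$ converges a.s.\ to a finite strictly positive random variable. Then $M_n:=T_n/g_n^{(\bar{w}^c)}$ is a non-negative supermartingale, so $M_n\to M_\infty\in[0,\infty)$ a.s., and combining with the product asymptotics yields $n^{-\bar{w}^c/\bar{w}}T_n\to L$ a.s.\ for some $L\in[0,\infty)$ --- it is important here that I do \emph{not} need $L>0$. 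To transfer this to $N_n$, write $N_n=\sum_{k<n}P_k(\mathcal{D}^c)+\mathcal{N}_n$ with $\mathcal{N}_n$ a martingale, $\langle\mathcal{N}\rangle_n=\sum_{k<n}P_k(\mathcal{D}^c)(1-P_k(\mathcal{D}^c))$; since $P_k(\mathcal{D}^c)=T_k/(\theta+S_k)\sim(L/\bar{w})k^{-\gamma}$, a martingale strong law gives $\mathcal{N}_n=o(n^{\bar{w}^c/\bar{w}})$, and $\sum_{k<n}k^{-\gamma}\sim n^{1-\gamma}/(1-\gamma)$ gives $\sum_{k<n}P_k(\mathcal{D}^c)\sim(L/\bar{w}^c)n^{\bar{w}^c/\bar{w}}$. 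Hence $n^{\gamma}\hat{P}_n(\mathcal{D}^c)=n^{\gamma-1}N_n\to L/\bar{w}^c=:\xi\in[0,\infty)$ and $n^{\gamma}P_n(\mathcal{D}^c)=n^{\gamma}T_n/(\theta+S_n)\to L/\bar{w}=(\bar{w}^c/\bar{w})\xi$.

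For the divergence statement, fix $\psi>\gamma$ and $\epsilon\in(0,\bar{w}^c)$ with $\gamma_\epsilon:=1-(\bar{w}^c-\epsilon)/\bar{w}<\psi$. With $A_\epsilon=\{x\in\mathcal{D}^c:w(x)>\bar{w}^c-\epsilon\}$, which has $\nu(A_\epsilon)>0$ by definition of $\bar{w}^c$, the counter $T_n^{A_\epsilon}=\theta\nu(A_\epsilon)+\sum_{i\le n:\,X_i\in A_\epsilon}W_i>0$ is non-decreasing and satisfies $\mathbb{E}[T_{n+1}^{A_\epsilon}-T_n^{A_\epsilon}\mid\mathcal{F}_n]\ge(\bar{w}^c-\epsilon)T_n^{A_\epsilon}/(\theta+S_n)$. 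Expanding $\log T_n^{A_\epsilon}$ telescopically with $\log(1+x)\ge x-x^2/2$ (the quadratic remainder a.s.\ summable by a telescoping bound $\lesssim 1/T_0^{A_\epsilon}$), and splitting the first-order part into an a.s.\ convergent martingale (its bracket controlled by the same summable quantity, via conditional Borel--Cantelli) plus the compensator $\ge\sum_{k<n}(\bar{w}^c-\epsilon)/(\theta+S_k)=\tfrac{\bar{w}^c-\epsilon}{\bar{w}}\log n+O(1)$, I obtain $T_n^{A_\epsilon}\ge c\,n^{(\bar{w}^c-\epsilon)/\bar{w}}$ eventually for a random $c>0$. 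Since $A_\epsilon\subseteq\mathcal{D}^c$ and $W_i\le\beta$, this gives $\hat{P}_n(\mathcal{D}^c)\gtrsim n^{-\gamma_\epsilon}$ and $P_n(\mathcal{D}^c)\ge T_n^{A_\epsilon}/(\theta+S_n)\gtrsim n^{-\gamma_\epsilon}$ eventually, so $n^{\psi}\hat{P}_n(\mathcal{D}^c)\gtrsim n^{\psi-\gamma_\epsilon}\to\infty$ and likewise for $P_n(\mathcal{D}^c)$, regardless of whether $\xi=0$. The hard part is making all exponents exact rather than $n^{\bar{w}^c/\bar{w}+o(1)}$: this is what forces the sharp product asymptotics, and hence the preliminary polynomial rate $P_n(\mathcal{D}^c)=O(n^{-\rho})$ of the bootstrap step; the other subtlety is recognising that the lower bound must come from the separate submartingale estimate on $A_\epsilon$ (which survives $\xi=0$) rather than from trying to prove $L>0$.
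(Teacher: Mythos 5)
Your proof is correct, but it follows a genuinely different route from the paper's. The paper treats the non-dominant observations as a DPS in their own right: it first argues that $\nu(\{x\in\mathcal{D}^c:w(x)>\bar{w}^c\})=0$, so that $\bar{w}^c$ plays for the subsequence $(X_{T_k})$ of non-dominant draws the role that $\bar{w}$ plays for the full sequence, whence $N_n(\mathcal{D}^c)/M_n(\mathcal{D}^c)\to\bar{w}^c$ by \cref{results:dps:arbitrary}; it then recycles Part~II of the proof of \cref{results:dps:mon} --- the supermartingale bound on $M_n(\mathcal{D})^{\psi}/M_n(\mathcal{D}^c)$ for the two-sided count estimates, and Pemantle's lemma applied to $\log\bigl(N_n(\mathcal{D}^c)/N_n(\mathcal{D})^{\bar{w}^c/\bar{w}}\bigr)$, whose increments have summable conditional means and squares, to get convergence in $[-\infty,\infty)$ --- and reads off both limits from the resulting algebraic identity. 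You instead normalize the weighted mass $N_n(\mathcal{D}^c)$ by the explicit predictable product $\prod_{k}(1+\bar{w}^c/(\theta+S_k))$, which makes it a non-negative supermartingale outright; the price is that you must prove \emph{sharp} asymptotics $\log g_n^{(c)}=(c/\bar{w})\log n+O(1)$ for the product, which forces your bootstrap (a crude polynomial rate for $P_n(\mathcal{D}^c)$ feeding back into $|S_n-\bar{w}n|=O(n^{1-\rho'})$ and hence $\sum_n|S_n-\bar{w}n|/n^2<\infty$). You then pass from weights to counts by the Dubins--Freedman/martingale strong law rather than via the subsequence construction, and you obtain the lower bound in the ``moreover'' part from a separate submartingale estimate on $A_\epsilon=\{x\in\mathcal{D}^c:w(x)>\bar{w}^c-\epsilon\}$, which correctly survives the case $\xi=0$ (this is the same role that $\bar{w}^c\in\mathrm{supp}((\nu_{|\mathcal{D}^c})_w)$ plays in the paper's two-sided ratio argument). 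Both proofs ultimately rest on the same two structural facts --- $w\le\bar{w}^c$ a.s.\ along non-dominant observations, and positive $\nu$-mass near the level $\bar{w}^c$ --- but yours is more self-contained and yields the explicit identification $\xi=\lim_n n^{-\bar{w}^c/\bar{w}}N_n(\mathcal{D}^c)/\bar{w}^c$, while the paper's is shorter because the Pemantle lemma absorbs the summable error terms without any preliminary rate. Two points you should tighten in a final write-up: the justification that the first-appearance mechanism forces $w(X_i)\le\bar{w}^c$ on $\{X_i\in\mathcal{D}^c\}$ is cleanest as an induction on $\mathbb{P}(X_{n+1}\in\{w>\bar{w}^c\}\cap\mathcal{D}^c\mid\mathcal{F}_n)=0$ rather than an appeal to the Blackwell--MacQueen representation (which the present, non-exchangeable scheme does not literally admit); and the summability of the martingale bracket in the lower-bound step deserves the explicit telescoping estimate $\sum_n\mathbb{E}[(T_{n+1}^{A_\epsilon}-T_n^{A_\epsilon})/(T_n^{A_\epsilon})^2\mid\mathcal{F}_n]\lesssim\sum_n\mathbb{E}[1/T_n^{A_\epsilon}-1/T_{n+1}^{A_\epsilon}\mid\mathcal{F}_n]<\infty$, which you only gesture at. Neither issue is a gap.
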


\subsection{Central limit theorem}

Let $A\in\mathcal{X}$ be such that $A\cap\mathcal{D}\neq\emptyset$. We provide conditions under which
$$C_n(A)=\sqrt{n}\bigl(\hat{P}_n(A)-P_n(A)\bigr)\qquad\textnormal{and}\qquad D_n(A)=\sqrt{n}\bigl(P_n(A)-\tilde{P}(A)\bigr),$$
converge to non-degenerate Gaussian limits.  The results are given in terms of stable and almost sure (a.s.) conditional convergence, which we briefly recall.

{\em Almost sure conditional convergence}. Let $\mathcal{G}=(\mathcal{G}_n)_{n\geq 0}$ be a filtration on $\mathcal{H}$, and $\tilde{Q}$ a random probability measure on $\mathbb{X}$. A sequence $(Y_n)_{n\geq 1}$ is said to converge to $\tilde{Q}$ in the sense of a.s. conditional convergence with respect to (w.r.t.) $\mathcal{G}$ if, as $n\rightarrow\infty$,
$$\mathbb{P}(Y_n\in\cdot\mid\mathcal{G}_n)\overset{w}{\longrightarrow}\tilde{Q}(\cdot)\quad\mbox{a.s.}[\mathbb{P}].$$
We refer to \cite{crimaldi2009} for more details.

{\em Stable convergence}. Stable convergence is a strong form of convergence in distribution, albeit weaker than a.s. conditional convergence. A sequence $(Y_n)_{n\geq 1}$ converges stably to $\tilde{Q}$ if, as $n\rightarrow\infty$,
$$\mathbb{E}\bigl[V\,f(Y_n)\bigr]\longrightarrow\mathbb{E}\Bigl[V \int_\mathbb{X}f(x)\tilde{Q}(dx)\Bigr],$$
for all continuous bounded functions $f$ and any integrable random variable $V$. The main application of stable convergence is in central limit theorems that allow for mixing variables in the limit. See \cite{hausler2015} for a complete reference on stable convergence.

In the sequel, the stable and a.s. conditional limits will be some Gaussian law, which we denote by $\mathcal{N}(\mu,\sigma^2)$ for parameters $(\mu,\sigma^2)$, where $\mathcal{N}(\mu,0)=\delta_\mu$. Recall from Section \ref{section:model} that $\mathcal{F}=(\mathcal{F}_n)_{n\geq0}$ is the filtration on $\mathcal{H}$ generated by $(X_n,U_n)_{n\geq1}$.

Lastly, we can show using standard arguments that $\mathbb{E}[f(X_{n+1})|\mathcal{F}_n]\overset{a.s.}{\longrightarrow}\int_\mathbb{X}f(x)\tilde{P}(dx)$ for every bounded measurable function $f$. Then the following limit exists with probability one
$$q_A=\limN\mathbb{E}[W_{n+1}^2\delta_{X_{n+1}}(A)|\mathcal{F}_n].$$

\begin{theorem}
\label{results:clt:arbitrary:discont}
Suppose $\bar{w}>2\bar{w}^c$. Let $A\in\mathcal{X}$ be such that $A\cap\mathcal{D}\neq\emptyset$. Define  
$$V(A)=\frac{1}{\bar{w}^2}\bigl\{(\tilde{P}(A^c))^2q_A+(\tilde{P}(A))^2q_{A^c}\bigr\}\quad\mbox{and}\quad U(A)=V(A)-\tilde{P}(A)\tilde{P}(A^c).$$
Then 
\begin{equation}\label{eq:clt:result}
C_n(A)\overset{stably}{\longrightarrow}\mathcal{N}\bigl(0,U(A)\bigr)\qquad\mbox{and}\qquad D_n(A)\overset{a.s.cond.}{\longrightarrow}\mathcal{N}\bigl(0,V(A)\bigr)\quad\mbox{w.r.t. }\mathcal{F}.
\end{equation}
\end{theorem}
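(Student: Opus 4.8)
The plan is to reduce the problem, via the conditioning observed just before the statement, to a central limit theorem for the c.i.d.-like sequence $(X_n)_{n\ge1}$ restricted to $\mathcal{D}$, where the expected weight is constant $\equiv\bar w$. Since $C_n(A)-\sqrt n(\hat P_n(A|\mathcal D)-P_n(A|\mathcal D))\to0$ and $D_n(A)-\sqrt n(P_n(A|\mathcal D)-\tilde P(A))\to0$ a.s., it suffices to prove \eqref{eq:clt:result} with $\hat P_n,P_n$ replaced by their $\mathcal D$-conditional versions; the hypothesis $\bar w>2\bar w^c$ guarantees $\hat P_n(\mathcal D^c),P_n(\mathcal D^c)=o(n^{-1/2})$ via \cref{results:clt:arbitrary:rate}, which is exactly what makes these replacements legitimate. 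So from now on I work as if $\mathbb X=\mathcal D$ and $w\equiv\bar w$.

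The core is a martingale CLT for $D_n(A)=\sqrt n(P_n(A)-\tilde P(A))$. First I would write $P_n(A)-\tilde P(A)$ as a telescoping sum $\sum_{k>n}(P_{k-1}(A)-P_k(A))$ — recall from \cref{results:dps:all} that $P_n(A)\to\tilde P(A)$ a.s. — and compute the one-step increment $P_k(A)-P_{k-1}(A)$ from the prediction rule \eqref{eq:intro:rrps}. Using $\theta+\sum_{j=1}^kW_j\sim \bar w\,k$ (from $\frac1n\sum W_i\to\bar w$), the increment is, to leading order, $\frac{W_{k}}{\bar w\,k}\bigl(\delta_{X_{k}}(A)-P_{k-1}(A)\bigr)$ plus negligible terms. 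Hence $D_n(A)$ is asymptotically a sum of martingale-difference-like terms $\frac{1}{\bar w}\frac{\sqrt n}{k}W_k(\delta_{X_k}(A)-\tilde P(A))$; here the subtlety is that $W_k$ and $X_k$ are \emph{not} conditionally independent given $\mathcal F_{k-1}$, so the conditional mean of the increment is not exactly zero — it carries a bias of order $\frac{1}{k}\operatorname{Cov}(W_k,\delta_{X_k}(A)\mid\mathcal F_{k-1})$. I would argue this covariance vanishes asymptotically on $\mathcal D$ (because $w$ is constant there, $\mathbb E[W_k\mid\mathcal F_{k-1},X_k]=\bar w$, making $W_k$ and $\delta_{X_k}(A)$ conditionally uncorrelated), so the bias is genuinely lower order and does not affect the Gaussian limit. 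Then I would apply a stable martingale CLT (e.g. from \cite{hausler2015} or \cite{crimaldi2009}) conditionally on $\mathcal F$: the conditional variance is $\lim_n n\sum_{k>n}\frac{1}{\bar w^2 k^2}\mathbb E[W_k^2(\delta_{X_k}(A)-\tilde P(A))^2\mid\mathcal F_{k-1}]$. Expanding the square and using $\mathbb E[W_k^2\delta_{X_k}(A)\mid\mathcal F_{k-1}]\to q_A$, $\mathbb E[W_k^2\mid\mathcal F_{k-1}]\to q_A+q_{A^c}$, together with $\sum_{k>n}k^{-2}\sim n^{-1}$, the variance collapses to $\frac{1}{\bar w^2}\{(1-\tilde P(A))^2q_A+\tilde P(A)^2 q_{A^c}\}=V(A)$, and conditional asymptotic negligibility of increments is immediate from boundedness $W_k\le\beta$. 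This gives the $D_n$ half.

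For $C_n(A)=\sqrt n(\hat P_n(A)-P_n(A))$, I would use the standard decomposition relating the empirical average to the predictive sequence: $n\hat P_n(A)-\sum_{k=0}^{n-1}P_k(A)=\sum_{k=1}^n(\delta_{X_k}(A)-P_{k-1}(A))$ is a martingale (w.r.t.\ $\mathcal F$) with bounded increments, so by the martingale CLT $\frac{1}{\sqrt n}\sum_{k=1}^n(\delta_{X_k}(A)-P_{k-1}(A))$ converges stably to $\mathcal N(0,\tilde P(A)\tilde P(A^c))$, its conditional variance being $\lim_n\frac1n\sum_{k=1}^n P_{k-1}(A)(1-P_{k-1}(A))=\tilde P(A)(1-\tilde P(A))$. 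Then I would write $C_n(A)=\frac{1}{\sqrt n}\sum_{k=1}^n(\delta_{X_k}(A)-P_{k-1}(A))+\sqrt n\bigl(\frac1n\sum_{k=0}^{n-1}P_k(A)-P_n(A)\bigr)$; the second term, by Abel summation against the increments computed above, is $-\frac{1}{\sqrt n}\sum_{k=1}^n\frac{k}{\bar w\,k}W_k(\delta_{X_k}(A)-P_{k-1}(A))+o(1)=-\frac{1}{\bar w\sqrt n}\sum_{k=1}^nW_k(\delta_{X_k}(A)-P_{k-1}(A))+o(1)$, which is itself a martingale term. Adding the two martingale sums gives a single martingale $\frac{1}{\sqrt n}\sum_{k=1}^n\bigl(1-\frac{W_k}{\bar w}\bigr)(\delta_{X_k}(A)-P_{k-1}(A))$ whose stable limit I compute as before: the conditional variance is $\lim_n\frac1n\sum_k\mathbb E[(1-W_k/\bar w)^2(\delta_{X_k}(A)-P_{k-1}(A))^2\mid\mathcal F_{k-1}]$, which by the same expansion equals $V(A)-\tilde P(A)\tilde P(A^c)=U(A)$ — the cross term $-2\mathbb E[\frac{W_k}{\bar w}\cdots]$ contributing $-2(\tilde P(A)\tilde P(A^c))$ plus the $V(A)$-type pieces, and careful bookkeeping leaving exactly $U(A)$.

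The main obstacle is the bias control in the $D_n$ argument: because $(X_n)$ is not c.i.d.\ globally, the martingale structure only emerges asymptotically after the $\mathcal D$-conditioning, and one must show rigorously that (i) the $o(n^{-1/2})$ mass on $\mathcal D^c$ does not leak into the leading-order terms, and (ii) the approximation of $\theta+\sum_{j\le k}W_j$ by $\bar w k$, and of the increment by its leading term, produces only errors summing to $o(1)$ after multiplication by $\sqrt n$ — this requires the rate $\bar w>2\bar w^c$ in an essential way and some uniform integrability / $L^2$ maximal-inequality estimates. The second delicate point is justifying that the conditional variances converge \emph{almost surely} (not just in probability) to the stated random limits $V(A),U(A)$, which is where the a.s.\ convergence $\mathbb E[W_{n+1}^2\delta_{X_{n+1}}(A)\mid\mathcal F_n]\to q_A$ and $\mathbb E[f(X_{n+1})\mid\mathcal F_n]\to\int f\,d\tilde P$, both flagged in the text, are used; combining them over the relevant quadratic forms and handling the Cesàro/tail averaging is the bulk of the routine-but-lengthy work.
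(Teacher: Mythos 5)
Your reduction and your variance computations are correct, and they match the paper's strategy at the top level: both use \cref{results:clt:arbitrary:rate} with $\bar{w}>2\bar{w}^c$ to show $\sqrt{n}\,P_n(\mathcal{D}^c)\to0$ and $\sqrt{n}\,\hat{P}_n(\mathcal{D}^c)\to0$, so that $C_n(A)$ and $D_n(A)$ can be replaced by their $\mathcal{D}$-conditional analogues. Where you diverge is in how the constant-weight CLT is then obtained. The paper does \emph{not} reprove it: it identifies $P_n(\cdot\,|\mathcal{D})$ as the predictive distribution of the subsequence $(X_{T_k})$ of dominant observations with respect to the subsequence filtration $\mathcal{G}_n$, and then invokes Theorem 1 and Proposition 1 of Berti et al.\ (2011) for that subsequence, followed by a generalized Slutsky argument to pass from $\sqrt{m_n}$ to $\sqrt{n}$ (using $M_n(\mathcal{D})/n\to1$). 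You instead sketch the underlying martingale arguments directly — the telescoping tail sum $\sum_{k>n}\frac{W_k}{N_k}(\delta_{X_k}(A)-P_{k-1}(A))$ for $D_n$, and the combined martingale $\frac{1}{\sqrt{n}}\sum_k(1-W_k/\bar{w})(\delta_{X_k}(A)-P_{k-1}(A))$ for $C_n$. These are in essence the proofs of the cited results, and your bookkeeping of the limit variances ($V(A)$ from the tail sum; $\tilde{P}(A)\tilde{P}(A^c)+V(A)-2\tilde{P}(A)\tilde{P}(A^c)=U(A)$ from the combined martingale, with the cross term computed via $\mathbb{E}[W_k(\delta_{X_k}(A)-P_{k-1}(A))^2|\mathcal{F}_{k-1}]=\bar{w}P_{k-1}(A)P_{k-1}(A^c)$ on $\mathcal{D}$) is exactly right. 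Your route is more self-contained; the paper's is shorter because the hard analytic work is outsourced.

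One point you elide that the paper must (and does) treat explicitly: ``work as if $\mathbb{X}=\mathcal{D}$'' hides a change of both time scale and filtration. The $\mathcal{D}$-conditional predictive is the subsequence predictive evaluated at the random time $m_n=M_n(\mathcal{D})-1$, and the natural a.s.\ conditional CLT you would obtain is with respect to the subsequence filtration $\mathcal{G}$, whereas the theorem asserts a.s.\ conditional convergence with respect to $\mathcal{F}$. The paper bridges this by noting that $\tilde{P}(A)$ is measurable with respect to $\sigma(X_{T_{m_1}},X_{T_{m_2}},\ldots)$ and that the future dominant observations are conditionally independent of $\mathcal{F}_n$ given $\mathcal{G}_n$; without some such argument the ``w.r.t.\ $\mathcal{F}$'' in the statement of $D_n(A)\overset{a.s.cond.}{\longrightarrow}\mathcal{N}(0,V(A))$ is not justified. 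This is the only genuine gap in your outline; the rest is a matter of executing the uniform estimates you already flag.
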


It follows from \eqref{eq:clt:result} and Lemma 1 in \citep{berti2011} that
$$\sqrt{n}\bigl(\hat{P}_n(A)-\tilde{P}(A)\bigr)=C_n(A)+D_n(A)\overset{stably}{\longrightarrow}\mathcal{N}\bigl(0,U(A)+V(A)\bigr).$$
Moreover, the a.s. conditional convergence of $D_n(A)$ implies that the distribution of $\tilde{P}(A)$ has no point masses when $0<\nu(A)<1$; see the proof of Theorem 3.2 in \citep{aletti2009}. Then $0<\tilde{P}(A)<1$ a.s.$[\mathbb{P}]$, and $V(A)$ and $U(A)$ are strictly positive with probability one.

If $\mathcal{D}=\mathbb{X}$, the assumptions of \cref{results:clt:arbitrary:discont} are automatically satisfied and \eqref{eq:clt:result} follows. In the general case, conditionally on $\mathcal{D}$, we have effectively a constant expected weight function. Let us define, for $n\geq1$,
$$P_n(A|\mathcal{D})=\frac{P_n(A\cap\mathcal{D})}{P_n(\mathcal{D})}\qquad\mbox{and}\qquad\hat{P}_n(A|\mathcal{D})=\frac{\hat{P}_n(A\cap\mathcal{D})}{\hat{P}_n(\mathcal{D})}.$$
Under $\bar{w}>2\bar{w}^c$, it holds $\nu(\mathcal{D})>0$, so $P_n(\mathcal{D})>0$. Moreover, $\liminfN\hat{P}_n(\mathcal{D})>0$ a.s.$[\mathbb{P}]$ by \cref{results:dps:arbitrary}; thus, $\hat{P}_n(\mathcal{D})>0$ a.s.$[\mathbb{P}]$ for large $n$. Then, working with the sequence $(X_n)_{n\geq1}$ restricted to $\mathcal{D}$, we get
\begin{equation}\label{eq:clt:result_cond}
\begin{aligned}
&\sqrt{n}\bigl(\hat{P}_n(A|\mathcal{D})-P_n(A|\mathcal{D})\bigr)\overset{stably}{\longrightarrow}\mathcal{N}\bigl(0,U(A)\bigr),\\
&\sqrt{n}\bigl(P_n(A|\mathcal{D})-\tilde{P}(A)\bigr)\overset{a.s.cond.}{\longrightarrow}\mathcal{N}\bigl(0,V(A)\bigr)\quad\mbox{w.r.t. }\mathcal{F}.
\end{aligned}
\end{equation}
In order to show \eqref{eq:clt:result}, the assumption $\bar{w}>2\bar{w}^c$ in \cref{results:clt:arbitrary:discont} is critical. Indeed, under $\bar{w}\leq 2\bar{w}^c$, \cref{results:clt:arbitrary:rate} implies that
$$D_n(A)-\sqrt{n}\bigl(P_n(A|\mathcal{D})-\tilde{P}(A)\bigr)=\sqrt{n}\;P_n(\mathcal{D}^c)P_n(A|\mathcal{D})\overset{a.s.}{\longrightarrow}\infty;$$
thus, $D_n(A)$ (and, similarly, $C_n(A)$) fails to converge as $\sqrt{n}(P_n(A|\mathcal{D})-\tilde{P}(A))$ converges by \eqref{eq:clt:result_cond}.

\begin{example}[$k$-color RRU (Continued)]
Consider again the $k$-color RRU from \cref{example:k-color:conv}. It follows that $q_j=\lim_n\mathbb{E}[W_{n+1}^2\delta_{X_{n+1}}(\{j\})|\mathcal{F}_n]$, $j=1,\ldots,k_0$ exists a.s.$[\mathbb{P}]$. Then, arguing as in \eqref{eq:clt:result_cond},
\begin{gather*}
\sqrt{n}\bigl(\hat{P}_n(\{j\}|\mathcal{D})-P_n(\{j\}|\mathcal{D})\bigr)\overset{stably}{\longrightarrow}\mathcal{N}\Bigl(0,\frac{\tilde{p}_j}{\bar{w}^2}\Bigl\{(1-\tilde{p}_j)^2q_j+\tilde{p}_j\sum_{i\leq k_0,i\neq j}q_i\tilde{p}_i\Bigr\}-\tilde{p}_j\bigl(1-\tilde{p}_j\bigr)\Bigr),\\
\sqrt{n}\bigl(P_n(\{j\}|\mathcal{D})-\tilde{p}_j\bigr)\overset{a.s.cond.}{\longrightarrow}\mathcal{N}\Bigl(0,\frac{\tilde{p}_j}{\bar{w}^2}\Bigl\{(1-\tilde{p}_j)^2q_j+\tilde{p}_j\sum_{i\leq k_0,i\neq j}q_i\tilde{p}_i\Bigr\}\Bigr)\quad\mbox{w.r.t. }\mathcal{F}.
\end{gather*}
In addition, $\mathbb{P}(\tilde{p}_j=p)=0$, $p\in[0,1]$, so that $p_j>0$ a.s.$[\mathbb{P}]$ and $\tilde{p}_i\neq\tilde{p}_j$ a.s.$[\mathbb{P}]$, $i\neq j$.
\end{example}

\subsection{An application: asymptotic credible intervals for $\tilde{P}$}

We close with an application of \cref{results:clt:arbitrary:discont} to a problem of statistical inference on the limit structure of the process. We assume that the random weights $W_n$ are observable, as it might be the case, for example, in the context of clinical trials (see \cref{example:clinical-trials}). Suppose $\mathbb{X}=\mathbb{R}$, $\mathcal{D}=[a,b]$, $\bar{w}>2\bar{w}^c$. Define $F_n(x)=P_n((-\infty,x])$, $x\in\mathbb R$, $n\geq 1$. By \cref{results:dps:arbitrary}, there exists a random distribution function $\tilde{F}$ on $[a,b]$ such that
$$\|F_n-\tilde{F}\|\overset{a.s.}{\longrightarrow}0.$$
The process $\tilde{F}$ is measurable w.r.t. $\sigma(X_1,W_1,X_2,W_2,\ldots)$ and so cannot be calculated on the basis of a finite sample $(X_i,W_i)_{i=1}^n$. Still, one can do inference on it; under a Bayesian approach, one does so by computing the conditional distribution of $\tilde{F}$ given $(X_i,W_i)_{i=1}^n$. The latter is usually summarized by a credible band or, in practice, by plotting {\em marginal} credible intervals around $\tilde{F}(x)$ for $x$ varying on a fine grid. Take $x\in[a,b]$ such that $0<\nu((-\infty,x])<1$. The following limits exist a.s.
\begin{equation}\label{appl:second-moment}
q_1=\limN\mathbb{E}\bigl[W_{n+1}^2\delta_{X_{n+1}}([a,x])\bigr|\mathcal F_n\bigr],\qquad q_0=\limN\mathbb{E}\bigl[W_{n+1}^2\delta_{X_{n+1}}((x,b])\bigr|\mathcal F_n\bigr].
\end{equation}
It follows from \cref{results:clt:arbitrary:discont} that
$$\sqrt n\bigl(F_n(x)-\tilde{F}(x)\bigr)\overset{a.s.cond.}{\longrightarrow}\mathcal{N}(0,V_x)\quad\mbox{w.r.t. }\mathcal{F},$$
where $V_x=\bar{w}^{-2}((1-\tilde{F}(x))^2q_1+\tilde{F}(x)^2q_0)$. In order to obtain asymptotic credible intervals for $\tilde{F}(x)$, we need a convergent estimator of $V_x$. Let us define, for $n\geq 1$,
\begingroup\allowdisplaybreaks
\begin{gather*}
V_{n,x}=\frac{1}{m_n^2}\bigl((1-F_n(x))^2s_{n,1}+(F_n(x))^2s_{n,0}\bigr),\\
m_n=\frac{1}{n}\sumN W_i,\quad s_{n,1}=\frac{1}{n}\sumN W_i^2\delta_{X_i}([a,x]),\quad s_{n,0}=\frac{1}{n}\sumN W_i^2\delta_{X_i}((x,b]).
\end{gather*}
\endgroup
By \cref{results:dps:arbitrary}, $m_n\overset{a.s.}{\longrightarrow}\bar{w}$. Moreover, $\sumINF\mathbb{E}[W_n^2]/n^2\leq\sumINF \beta^2/n^2<\infty$, so Lemma 2 in \citep{berti2011} implies from \eqref{appl:second-moment} that $s_{n,1}\overset{a.s.}{\longrightarrow}q_1$ and $s_{n,0}\overset{a.s.}{\longrightarrow}q_0$. Thus,
$$V_{n,x}\overset{a.s.}{\longrightarrow}V_x.$$
A generalized Slutsky's theorem (see also \citep[][Theorem 6]{fortini2020}) implies, for every $t\in\mathbb{R}$,
$$\mathbb{P}\Bigl(\frac{\tilde{F}(x)-F_n(x)}{\sqrt{V_{n,x}/n}}\leq t\Bigl|\mathcal{F}_n\Bigr)\overset{a.s.}{\longrightarrow}{\mathcal{N}}(0,1)((-\infty,t]).$$
As a consequence, denoting by $z_\alpha$ the $(1-\alpha/2)$-quantile of the standard Normal distribution, we get
$$\liminfN\mathbb{P}\Bigl(
F_n(x)-z_\alpha\sqrt{\frac{V_{n,x}}{n}}\leq \tilde{F}(x)\leq F_n(x)+z_\alpha\sqrt{\frac{V_{n,x}}{n}}\Bigr|\mathcal{F}_n 
\Bigr)\geq 1-\alpha.$$
Thus, an asymptotic marginal credible interval for $\tilde{F}(x)$ at level $1-\alpha$ is
$$\Bigl[F_n(x)-z_\alpha\sqrt{\frac{V_{n,x}}{n}}, \; F_n(x)+z_\alpha\sqrt{\frac{V_{n,x}}{n}}\Bigr].$$

\section{Proofs}
\label{section:proofs}

We shall use the following notation throughout this section,
\begin{gather*}
N_n(A)=\theta\nu(A)+\sumN W_i\delta_{X_i}(A),\quad M_n(A)=\sumN\delta_{X_i}(A)+1,\quad N_n=\theta+\sumN W_i,
\end{gather*}
so that $P_n(A)=N_n(A)/N_n$ for every $n\geq 0$ and $A\in\mathcal{X}$, with the convention $\sum_{i=1}^0a_i=0$, and we will omit the parenthesis when $A$ is an interval for the sake of clarity. All random quantities are defined on the probability space $(\Omega,\mathcal{H},\mathbb{P})$ unless otherwise specified.

The following identity, given for any $A\in\mathcal{X}$ and $n\geq 0$, is a consequence of \eqref{eq:def:weights} and will be applied repeatedly,
\begin{equation}
\label{eq:appendix:dps:expected_weight}
\mathbb{E}[W_{n+1}\delta_{X_{n+1}}(A)|\mathcal{F}_n]=\mathbb{E}[w(X_{n+1})\delta_{X_{n+1}}(A)|\mathcal{F}_n].
\end{equation}

The proof of \cref{results:dps:all} uses the following general fact for DPSs with $\mathbb{X}=\mathcal{D}$.

\begin{lemma}
\label{results:dps:all:lemma}
Under the conditions in \cref{results:dps:all}, as $n\rightarrow\infty$,
$$\frac{n}{N_n}\longrightarrow\frac{1}{\bar{w}}\qquad\mbox{a.s.}[\mathbb{P}]\mbox{ and in }L^p\mbox{ for all }p\geq1.$$
\end{lemma}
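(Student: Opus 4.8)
The plan is to reduce the statement to a law of large numbers for the partial sums $\sum_{i=1}^n W_i$ and then transfer it to $n/N_n$ by continuity, dealing with the $L^p$ part via an exponential lower-tail bound on $\sum_{i=1}^nW_i$.

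First I would exploit that when $\mathbb{X}=\mathcal{D}$ the expected weight function is constant, $w\equiv\bar w$, so that \eqref{eq:appendix:dps:expected_weight} with $A=\mathbb{X}$ gives $\mathbb{E}[W_{n+1}\mid\mathcal F_n]=\mathbb{E}[w(X_{n+1})\mid\mathcal F_n]=\bar w$ for every $n\ge 0$ (this also reads $\mathbb{E}[W_1]=\mathbb{E}[w(X_1)]=\bar w$). Hence the partial sums $S_n=\sum_{i=1}^n(W_i-\bar w)$, $S_0=0$, form an $(\mathcal F_n)$-martingale with increments bounded by $\beta$, since $0\le W_i\le\beta$ and $0\le\bar w\le\beta$ by \eqref{eq:def:condition}. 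Because $\sum_{i\ge1}\mathbb{E}[(W_i-\bar w)^2]/i^2\le\beta^2\sum_{i\ge1}i^{-2}<\infty$, the martingale $\sum_{i=1}^n(W_i-\bar w)/i$ is bounded in $L^2$, hence converges a.s., and Kronecker's lemma yields $S_n/n\to0$ a.s. (alternatively, one invokes directly the strong law for martingales with bounded increments). Therefore $\tfrac1n\sum_{i=1}^nW_i\to\bar w$ a.s., so $N_n/n=\theta/n+\tfrac1n\sum_{i=1}^nW_i\to\bar w$ a.s., and, since $\bar w>0$, also $n/N_n\to1/\bar w$ a.s.

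For the $L^p$ conclusion it then suffices, given the a.s. convergence, to check that $\{(n/N_n)^p\}_{n\ge1}$ is uniformly integrable, which I would get by showing $\sup_n\mathbb{E}[(n/N_n)^q]<\infty$ for every $q\ge1$. Applying Azuma--Hoeffding to the martingale $S_n$ (increments bounded by $\beta$) gives $\mathbb{P}\bigl(\sum_{i=1}^nW_i\le\tfrac{\bar w}{2}n\bigr)=\mathbb{P}\bigl(S_n\le-\tfrac{\bar w}{2}n\bigr)\le e^{-cn}$ with $c=\bar w^2/(8\beta^2)>0$. On the complement of that event $N_n\ge\tfrac{\bar w}{2}n$, so $n/N_n\le 2/\bar w$, while on the event one uses the crude bound $n/N_n\le n/\theta$. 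Splitting the expectation accordingly, $\mathbb{E}[(n/N_n)^q]\le(2/\bar w)^q+(n/\theta)^q e^{-cn}$, whose right-hand side is bounded in $n$. Thus $(n/N_n)^p$ is bounded in $L^{q/p}$ for any $q>p$, hence uniformly integrable, and together with the a.s. limit this gives $n/N_n\to1/\bar w$ in $L^p$ for all $p\ge1$.

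The main obstacle is the $L^p$ part: since the weights may vanish, $\sum_{i=1}^nW_i$ can be small and $n/N_n$ is not uniformly bounded, so no pointwise domination is available and one genuinely needs the exponential estimate on the lower tail of $\sum_{i=1}^nW_i$ to control its reciprocal moments. The a.s. part, by contrast, is routine once the martingale structure of $S_n$ is recognized.
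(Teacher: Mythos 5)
Your proof is correct and follows essentially the same route as the paper: the a.s.\ part via the martingale strong law for $\sum_i(W_i-\bar w)$ (the paper cites \citep[][Lemma 2]{berti2011} where you argue by Kronecker's lemma), and the $L^p$ part via an Azuma--Hoeffding lower-tail bound on $\sum_iW_i$ to show $\sup_n\mathbb{E}[(n/N_n)^q]<\infty$, the paper writing this as a split of the tail-integral formula for $\mathbb{E}[N_n^{-p}]$ where you split the expectation over the bad event and its complement. The two arguments are interchangeable.
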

\begin{proof}
By hypothesis, $\mathbb{E}[W_{n+1}|\mathcal{F}_n]=\bar{w}$ and $\sumINF\mathbb{E}[W_n^2]/n^2\leq\sumINF\beta^2/n^2<\infty$, so \citep[][Lemma 2]{berti2011} implies that $N_n/n\overset{a.s.}{\longrightarrow}\bar{w}$. Define $N_n^*=\sumN \{W_i-\mathbb{E}[W_i]\}$, $n\geq 1$. By a classical martingale inequality (see \citep[][Lemma 1.5]{ledoux1991} and \cite[][Lemma 3]{berti2011}), we have
$$\mathbb{P}(|N_n^*|>x)\leq 2\,\exp\bigl\{-x^2\bigr/2\beta n^2\bigr\}\qquad\mbox{for all }x>0.$$
Fix $p>0$. It follows that
$$\mathbb{E}[N_n^{-p}]=p\int_\theta^\infty\frac{1}{t^{p+1}}\mathbb{P}(N_n<t)dt\leq\frac{p}{\theta^{p+1}}\int_\theta^{\theta+\frac{n\bar{w}}{2}}\mathbb{P}(N_n<t)dt+p\int_{\theta+\frac{n\bar{w}}{2}}^\infty\frac{1}{t^{p+1}}dt.$$
Clearly, $p\int_{\theta+\frac{n\bar{w}}{2}}^\infty t^{-p-1}dt=(\theta+\frac{n\bar{w}}{2})^{-p}=O(n^{-p})$. On the other hand, for $t<\theta+\frac{n\bar{w}}{2}$,
$$\mathbb{P}(N_n<t)=\mathbb{P}(N_n^*<t-\theta-n\bar{w})\leq\mathbb{P}(|N_n^*|>\theta+n\bar{w}-t)\leq2\exp\Bigl\{-\frac{(\theta+n\bar{w}-t)^2}{2\beta^2n}\Bigr\};$$
thus, $\int_\theta^{\theta+\frac{n\bar{w}}{2}}\mathbb{P}(N_n<t)dt\leq n\bar{w}\exp\{-n\bar{w}^2/4\beta^2\}$ and, ultimately, $\mathbb{E}[N_n^{-p}]=O(n^{-p})$.
\end{proof}

\begin{proof}[\textnormal{\textbf{Proof of \cref{results:dps:all}}}]
Let $A\in\mathcal{X}$ and $n\geq 0$. Then
\begingroup\allowdisplaybreaks
\begin{align*}
P_{n+1}(A)-P_n(A)&=\frac{W_{n+1}/N_n}{1+W_{n+1}/N_n}\bigl(\delta_{X_{n+1}}(A)-P_n(A)\bigr)\\
&\leq\frac{W_{n+1}}{N_n}P_n(A^c)\delta_{X_{n+1}}(A)-\Bigl(\frac{W_{n+1}}{N_n}-\frac{W_{n+1}^2}{N_n^2}\Bigr)P_n(A)\delta_{X_{n+1}}(A^c),
\end{align*}
\endgroup
where we have used that $x-x^2\leq\frac{x}{1+x}\leq x$ for $x\geq0$; therefore,
\begin{equation}\label{eq:appendix:dps:dom_full:difference}
\mathbb{E}[P_{n+1}(A)-P_n(A)|\mathcal{F}_n]\leq \beta^2 P_n(A)P_n(A^c)N_n^{-2},
\end{equation}
and, by \cref{results:dps:all:lemma},
\begingroup\allowdisplaybreaks
\begin{gather}
\sum_{n=0}^\infty\mathbb{E}[P_{n+1}(A)-P_n(A)|\mathcal{F}_n]\leq\sum_{n=0}^\infty\frac{\beta^2}{N_n^2}<\infty\qquad\mbox{a.s.}[\mathbb{P}],\label{eq:appendix:dps:dom_full:sum}\\
\sum_{n=0}^\infty\mathbb{E}\bigl[(P_{n+1}(A)-P_n(A))^2|\mathcal{F}_n\bigr]\leq\sum_{n=0}^\infty\mathbb{E}\biggl[\frac{W_{n+1}^2}{N_n^2}\Bigr|\mathcal{F}_n\biggr]\leq\sum_{n=0}^\infty\frac{\beta^2}{N_n^2}<\infty\quad\mbox{a.s.}[\mathbb{P}].\label{eq:appendix:dps:dom_full:sum2}
\end{gather}
\endgroup
Given \eqref{eq:appendix:dps:dom_full:sum} and \eqref{eq:appendix:dps:dom_full:sum2}, there exists by \citep[][Lemma 3.2]{pemantle1999} a random variable $\tilde{p}_A$ such that
$$P_n(A)\overset{a.s.}{\longrightarrow}\tilde{p}_A.$$
Define $\tilde{Q}(A)=\tilde{p}_A$, $A\in\mathcal{X}$. Then $\tilde{Q}(\mathbb{X})=1$, $\tilde{Q}(\emptyset)=0$ and $\tilde{Q}(A)\geq0$, $A\in\mathcal{X}$, a.s.$[\mathbb{P}]$. Moreover, $\omega\mapsto\tilde{Q}(A)(\omega)$ is $\mathcal{H}$-measurable for every $A\in\mathcal{X}$, and $\tilde{Q}(A_1\cup A_2)=\tilde{Q}(A_1)+\tilde{Q}(A_2)$ a.s.$[\mathbb{P}]$ for disjoint $A_1,A_2\in\mathcal{X}$. In addition, the map $A\mapsto\mathbb{E}[\tilde{Q}(A)]$ is a probability measure on $\mathbb{X}$. Indeed, let $\{A_m\}_{m\geq 1}\subseteq\mathcal{X}$ be such that $A_k\cap A_l=\emptyset$, $k\neq l$. Put $A=\bigcup_{m=1}^\infty A_m$. It follows from  \eqref{eq:appendix:dps:dom_full:difference} that
$$\mathbb{E}[P_n(A_m)]=\sum_{i=1}^n\mathbb{E}[P_i(A_m)-P_{i-1}(A_m)]+\nu(A_m)\leq\beta^2\sum_{n=0}^\infty\mathbb{E}\bigl[P_n(A_m)N_n^{-2}]+\nu(A_m)\equiv M(m).$$
By dominated convergence theorem with respect to the counting measure, $\limN\sum_{m=1}^\infty\mathbb{E}[P_n(A_m)]=\sum_{m=1}^\infty\limN\mathbb{E}[P_n(A_m)]$, whenever $\sum_{m=1}^\infty M(m)<\infty$. The latter follows from \cref{results:dps:all:lemma} as
$$\sum_{m=1}^\infty M(m)=\beta^2\sum_{n=0}^\infty\sum_{m=1}^\infty\mathbb{E}\bigl[P_n(A_m){N_n^{-2}}\bigr]+\nu(A)\leq\beta^2\sum_{n=0}^\infty\mathbb{E}[N_n^{-2}]+\nu(A)<\infty.$$
Therefore,
\begin{equation}\label{eq:appendix:dps:dom_full:changelimits}
\mathbb{E}[\tilde{Q}(A)]=\limN\sum_{m=1}^\infty\mathbb{E}[P_n(A_m)]=\sum_{m=1}^\infty\limN\mathbb{E}[P_n(A_m)]=\sum_{m=1}^\infty\mathbb{E}[\tilde{Q}(A_m)].
\end{equation}
By \citep[][Theorem 3.1]{ghosal2017}, there exists  a random probability measure $\tilde{P}$ on $\mathbb{X}$ such that $\tilde{Q}(A)=\tilde{P}(A)$ a.s.$[\mathbb{P}]$, for every $A\in\mathcal{X}$; thus,
$$P_n(A)\overset{a.s.}{\longrightarrow}\tilde{P}(A).$$
Using standard arguments, $\mathbb{E}[f(X_{n+1})|\mathcal{F}_n]\overset{a.s.}{\longrightarrow}\int_\mathbb{X}f(x)\tilde{P}(dx)$ for every bounded measurable function $f$, which implies that $P_n\overset{w}{\longrightarrow}\tilde{P}$ a.s.$[\mathbb{P}]$ since $\mathbb{X}$ is separable.

Define $S_n=\{X_1,\ldots,X_n\}$, $n\geq 1$. Then $S_n\uparrow\{X_1,X_2,\ldots\}\equiv S$ and, by Portmanteau theorem,
$$\tilde{P}(S)=\limN\tilde{P}(S_n)\geq\limN\limsup_{k\rightarrow\infty}P_k(S_n)\geq\limN\limsup_{k\rightarrow\infty}\Bigl(1-\frac{\theta}{N_k}\Bigr)=1\qquad\textnormal{a.s.}[\mathbb{P}],$$
that is, $\tilde{P}$ is $\mathbb{P}$-a.s. discrete with random support $S$. On the other hand, proceeding as in \eqref{eq:appendix:dps:dom_full:sum} and \eqref{eq:appendix:dps:dom_full:sum2}, we can show that $P_n(\{X_m\})\overset{a.s.}{\longrightarrow}\tilde{p}_m$ for some random variable $\tilde{p}_m$, $m\geq1$. By Portmanteau theorem,
$$\tilde{P}(\{X_m\})\geq\limsupN P_n(\{X_m\})=\tilde{p}_m\qquad\textnormal{a.s.}[\mathbb{P}],\mbox{ for }m\geq 1.$$
But $1=\tilde{P}(S)=\sum_{m=1}^\infty\tilde{P}(\{X_m\})\geq\sum_{m=1}^\infty\tilde{p}_m$ a.s.$[\mathbb{P}]$ and, using the same arguments as in \eqref{eq:appendix:dps:dom_full:changelimits},
$$\mathbb{E}\Bigl[\sum_{m=1}^\infty\tilde{p}_m\Bigr]=\sum_{m=1}^\infty\limN\mathbb{E}[P_n(\{X_m\})]=\limN\mathbb{E}[P_n(S)]=1\qquad\textnormal{a.s.}[\mathbb{P}],$$
so $\sum_{m=1}^\infty\tilde{p}_m=1$ a.s.$[\mathbb{P}]$, and thus $\tilde{P}(\{X_m\})=\tilde{p}_m$ a.s.$[\mathbb{P}]$, $m\geq1$. As a result, we have
$$\mathbb{P}\bigl(P_n(\{x\})\rightarrow\tilde{P}(\{x\})\mbox{ for all }x\in S\bigr)=\mathbb{P}\bigl(P_n(\{X_m\})\rightarrow\tilde{P}(\{X_m\})\mbox{ for all }m\geq1\bigr)=1,$$
and, by Scheffe's lemma,
$$\|P_n-\tilde{P}\|\leq\sup_{x\in S}\bigl|P_n(\{x\})-\tilde{P}(\{x\})\bigr|+P_n(S^c)\leq\sum_{x\in S}\bigl|P_n(\{x\})-\tilde{P}(\{x\})\bigr|+\frac{\theta}{N_n}\nu(S^c)\overset{a.s.}{\longrightarrow}0.$$
Finally, \citep[][Lemma 2]{berti2011} implies that $\hat{P}_n(\{X_m\})\overset{a.s.}{\longrightarrow}\tilde{P}(\{X_m\})$, $m\geq 1$, and so $\|\hat{P}_n-\tilde{P}\|\overset{a.s.}{\longrightarrow}0$.
\end{proof}

The proof of \cref{results:dps:mon} makes use of the following three preliminary results.

\begin{lemma}\label{appendix:mon:lemma1} 
Under the conditions in \cref{results:dps:mon}, as $n\rightarrow\infty$, $N_n(t,1]\overset{a.s.}{\longrightarrow}\infty$ for every $t\in(0,1)$.
\end{lemma}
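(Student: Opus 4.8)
The goal is to show that, for a DPS on $\mathbb{X}=[0,1]$ with continuous strictly increasing $w$, we have $N_n(t,1]\to\infty$ a.s.\ for every fixed $t\in(0,1)$. Since $N_n(t,1]=\theta\nu(t,1]+\sumN W_i\delta_{X_i}(t,1]$ is nondecreasing in $n$, it converges a.s.\ to some (possibly infinite) limit, and it suffices to rule out the event that $\sumN W_i\ind{(t,1]}(X_i)$ stays bounded. The plan is to argue by contradiction: on the event $\{N_\infty(t,1]<\infty\}$, the colors in $(t,1]$ are drawn only finitely often and contribute a bounded total weight, so from some point on the urn effectively samples only from $[0,t]$, where the expected reinforcement is strictly less than $w(1)$ — but this forces the probability of drawing a fresh color from $(t,1]$ (via the $\theta\nu/N_n$ term) to decay too slowly to be summable, contradicting Borel--Cantelli in reverse.

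More concretely, the key steps, in order: \textbf{(1)} Write $P_n(t,1]=N_n(t,1]/N_n$ and observe that $N_n\le\theta+\beta n$ always, so $P_n(t,1]\ge N_n(t,1]/(\theta+\beta n)$; in particular, $P_n(t,1]\ge \theta\nu(t,1]/(\theta+\beta n)$, which is of order $1/n$ and hence $\sum_n P_n(t,1]=\infty$ a.s.\ whenever $\nu(t,1]>0$. \textbf{(2)} Handle the case $\nu(t,1]=0$ separately: since $w$ is strictly increasing and continuous, $\bar w=w(1)$ and for every $\delta>0$ one has $\nu_w$ charging values arbitrarily close to $w(1)$ only if $\nu$ charges points arbitrarily close to $1$; the assumption $\bar w\in\mathrm{supp}(\nu_w)$ in \eqref{eq:def:condition} then guarantees $\nu(t,1]=\nu(\{x:w(x)>w(t)\})>0$ for every $t<1$, so in fact this case does not arise. \textbf{(3)} Apply a conditional Borel--Cantelli / martingale argument: the increments $\Delta_n = W_{n+1}\delta_{X_{n+1}}(t,1]$ satisfy $\mathbb{E}[\Delta_n\mid\mathcal{F}_n] = \mathbb{E}[w(X_{n+1})\ind{(t,1]}(X_{n+1})\mid\mathcal F_n]$ by \eqref{eq:appendix:dps:expected_weight}, which is bounded below by $w(t)\,P_n(t,1]$. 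Since $\sum_n P_n(t,1]=\infty$ a.s.\ from step (1), the series $\sum_n\mathbb{E}[\Delta_n\mid\mathcal F_n]$ diverges a.s.; by the extended Borel--Cantelli lemma for adapted sequences (Lévy's conditional form), $\sum_n \Delta_n=\infty$ a.s., i.e.\ $N_n(t,1]\to\infty$ a.s.

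The main obstacle is step (1)--(2): making sure $\nu(t,1]>0$ (equivalently that the lower bound $P_n(t,1]\gtrsim 1/n$ is genuinely available) rather than merely $\nu_w$ having $\bar w$ in its support in some degenerate way. If $\nu$ were to put no mass near $1$ the whole mechanism would stall, so one must lean carefully on the second line of \eqref{eq:def:condition} together with continuity and strict monotonicity of $w$ to conclude $\nu(\{x:w(x)>w(t)\})>0$ for all $t\in(0,1)$. Once that is secured, the divergence of $\sum_n P_n(t,1]$ is immediate from the deterministic bound $N_n\le\theta+\beta n$, and the conditional Borel--Cantelli step is routine. A minor technical point is to invoke the correct version of the second Borel--Cantelli lemma (for $\mathcal F_n$-adapted nonnegative increments with summable conditional variances or with uniformly bounded increments, the latter holding here since $0\le\Delta_n\le\beta$), which applies without difficulty.
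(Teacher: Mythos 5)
Your proposal is correct and follows essentially the same route as the paper: both rest on deducing $\nu(t,1]>0$ from $\bar{w}\in\mathrm{supp}(\nu_w)$ together with the continuity and strict monotonicity of $w$, on the deterministic bound $P_n(t,1]\geq\theta\nu(t,1]/(\theta+\beta n)$ giving $\sum_n P_n(t,1]=\infty$, and on a conditional Borel--Cantelli argument for bounded nonnegative adapted increments. The only (harmless) difference is that the paper runs the conditional Borel--Cantelli step in two stages --- first obtaining $\sum_n\delta_{X_n}(t,1]=\infty$ via Dubins--Freedman and then lifting to the weighted sums by conditioning on $\mathcal{F}_{n-1}\vee\sigma(X_n)$ --- whereas you condition directly on $\mathcal{F}_n$ and apply the extended Borel--Cantelli lemma once.
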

\begin{proof}
Fix $t\in(0,1)$. Denote $A=(t,1]$. It follows from \eqref{eq:def:condition} that $P_n(A)\geq\theta\nu(A)\bigr/(\theta+n\beta)>0$, so $\sumINF P_n(A)=\infty$, and thus $\sumINF\delta_{X_n}(A)=\infty$ a.s.$[\mathbb{P}]$ by \citep[][Theorem 1]{dubins1965}. Since $N_n(A)=\sumN(N_i(A)-N_{i-1}(A))+\theta\nu(A)$, then, by \citep[][Theorem 1]{chen1978}, $N_n(A)\overset{a.s.}{\longrightarrow}\infty$ if we can show $\sumINF\mathbb{E}[N_n(A)-N_{n-1}(A)|\mathcal{F}_{n-1}\vee\sigma(X_n)]=\infty$ a.s.$[\mathbb{P}]$. But $w$ is strictly increasing, so
$$\sumN\mathbb{E}[N_i(A)-N_{i-1}(A)|\mathcal{F}_{i-1}\vee\sigma(X_i)]=\sumN w(X_i)\delta_{X_i}(A)\geq w(t)\sumN\delta_{X_i}(A)\overset{a.s.}{\longrightarrow}\infty.$$
\end{proof}

\begin{lemma}\label{appendix:mon:lemma2}
Under the conditions in \cref{results:dps:mon}, for every $t\in(0,1)$, there exist $s\in(t,1)$ and $\lambda_t\in(0,1)$ such that, for all $\lambda\in(\lambda_t,1]$, the ratio $N_n[0,t]\bigl/N_n(s,1]^\lambda$ converges $\mathbb{P}$-a.s. to a finite limit.
\end{lemma}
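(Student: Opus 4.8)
The plan is to control the ratio $N_n[0,t]/N_n(s,1]^\lambda$ by showing that its increments, conditioned on $\mathcal{F}_{n-1}\vee\sigma(X_n)$, form a summable sequence, so that the ratio converges by a Robbins--Siegmund type argument. First I would fix $t\in(0,1)$. Since $w$ is continuous and strictly increasing, for any $s\in(t,1)$ we have $w(s)>w(t)$; I will choose $s$ close enough to $1$, and then set $\lambda_t$ so that for $\lambda\in(\lambda_t,1]$ we still have $\lambda\, w(s)>w(t)$, i.e. $\lambda_t=w(t)/w(s)<1$. The key point is that a color in $[0,t]$ contributes expected increment $w(X_n)\leq w(t)$ to the numerator, while a color in $(s,1]$ contributes at least $w(s)$ to the quantity being raised to the power $\lambda$, and the factor $\lambda$ in the exponent still leaves a strict gap $\lambda w(s) - w(t)>0$; this gap is what forces the ratio to stay bounded.

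The main computation would be an increment estimate for $R_n \equiv N_n[0,t]\bigl/N_n(s,1]^\lambda$. Writing $A=[0,t]$, $B=(s,1]$, note $N_n(A)=N_{n-1}(A)+W_n\delta_{X_n}(A)$ and similarly for $B$, with $0\le W_n\le\beta$. Using the elementary bound $(1+x)^{-\lambda}\le 1-\lambda x + c_\lambda x^2$ for $x\ge 0$ small (valid once $N_{n-1}(B)$ is large, which holds eventually a.s.\ by \cref{appendix:mon:lemma1}), I would expand
$$
\mathbb{E}[R_n-R_{n-1}\,|\,\mathcal{F}_{n-1}\vee\sigma(X_n)]
\;\le\; \frac{w(X_n)\delta_{X_n}(A)}{N_{n-1}(B)^\lambda}
-\lambda\,\frac{N_{n-1}(A)\, w(X_n)\delta_{X_n}(B)}{N_{n-1}(B)^{\lambda+1}}
+ (\text{remainder}),
$$
where the remainder is $O(N_{n-1}(B)^{-2})$ times bounded factors. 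The leading terms are negative precisely when $X_n\in B$ and $N_{n-1}(A)/N_{n-1}(B)$ is not too small, and harmless (order $N_{n-1}(B)^{-\lambda}$, hence summable if $\lambda>1$, but here $\lambda\le1$, so more care is needed) when $X_n\in A$. To handle the case $\lambda\le 1$ I would instead bound the positive part directly: since $w(X_n)\le w(t)$ on $A$, the positive increment is at most $w(t)N_{n-1}(B)^{-\lambda}$, and I would absorb it against the drift produced on the event $\{X_n\in B\}$, whose predictive probability $P_{n-1}(B)$ is bounded below by a constant multiple of $N_{n-1}(B)/N_{n-1}$, combined with $N_{n-1}(B)\to\infty$. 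Summing, the negative drift dominates and one obtains $\sum_n \mathbb{E}[(R_n-R_{n-1})^+\mid\mathcal{F}_{n-1}\vee\sigma(X_n)]<\infty$ a.s.; since $R_n\ge 0$, a nonnegative near-supermartingale convergence theorem (Robbins--Siegmund, or \citep[][Lemma 3.2]{pemantle1999} as already invoked in the proof of \cref{results:dps:all}) yields a.s.\ convergence of $R_n$ to a finite limit.

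I expect the main obstacle to be precisely the regime $\lambda\le 1$, where the "easy" positive increments $w(t)N_{n-1}(B)^{-\lambda}$ are \emph{not} summable on their own, so one genuinely needs the strict inequality $\lambda w(s)>w(t)$ and a quantitative lower bound on how often $X_n$ lands in $B=(s,1]$ relative to $A=[0,t]$. The cleanest route is probably to work with $\log R_n$ rather than $R_n$: then $\log N_n(A)-\lambda\log N_n(B)$ has increments $\log(1+W_n\delta_{X_n}(A)/N_{n-1}(A)) - \lambda\log(1+W_n\delta_{X_n}(B)/N_{n-1}(B))$, and a telescoping/comparison argument using \cref{appendix:mon:lemma1} to guarantee $N_n(B)\to\infty$, together with the fact that the expected log-increment of the numerator is dominated by that of $\lambda\log N_n(B)$ once both are large, shows $\log R_n$ is bounded above; boundedness below is automatic only if $N_n(A)$ stays bounded away from $0$, which it does since $N_n(A)\ge N_0(A)=\theta\nu[0,t]$ — here one should note $\nu[0,t]>0$ may fail, in which case $N_n(A)$ could be eventually $0$ and the ratio is trivially $0$, so the statement holds vacuously. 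Assembling these cases gives the claimed a.s.\ convergence to a finite limit for all $\lambda\in(\lambda_t,1]$ simultaneously, by taking $\lambda_t=w(t)/w(s)$ and noting monotonicity of the bound in $\lambda$.
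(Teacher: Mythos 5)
Your setup is the right one --- the choice of $s$ and $\lambda_t$ so that $\lambda w(s)>w(t)$ for all $\lambda\in(\lambda_t,1]$, and the computation showing that the conditional drift of $R_n=N_n[0,t]/N_n(s,1]^{\lambda}$ is bounded by a negative multiple of $N_n[0,t]\bigl/(N_n\,N_n(s,1]^{\lambda})$ once $N_n(s,1]$ is large, are exactly the paper's ingredients. The gap is in how you pass from ``drift eventually nonpositive'' to convergence. Your first route asks for $\sum_n\mathbb{E}[(R_n-R_{n-1})^+\mid\mathcal{F}_{n-1}\vee\sigma(X_n)]<\infty$: conditioning on $X_n$ destroys the cancellation between the $[0,t]$ and $(s,1]$ contributions, and on $\{X_n\in[0,t]\}$ the positive increment is of order $N_{n-1}(s,1]^{-\lambda}$ with $\lambda\le 1$ and $N_n(s,1]=O(n)$; summing this over the times $X_n$ lands in $[0,t]$ requires a growth bound on $M_n[0,t]$ that is essentially equivalent to the lemma being proved, so this summability cannot be taken for granted. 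A Robbins--Siegmund/Pemantle-type lemma with conditioning on $\mathcal{F}_{n-1}$ alone fares no better, because its second condition $\sum_n\mathbb{E}[(R_n-R_{n-1})^2\mid\mathcal{F}_{n-1}]<\infty$ runs into the same unverified term $\sum_n N_{n-1}[0,t]\bigl/(N_{n-1}\,N_{n-1}(s,1]^{2\lambda})$. Your fallback via $\log R_n$ only yields an upper bound on the ratio, whereas the statement asserts a.s.\ convergence to a finite limit.

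The missing idea is a localization by stopping times, which lets you use the bare non-negative supermartingale convergence theorem and avoid all summability conditions. Since $N_n(s,1]\overset{a.s.}{\longrightarrow}\infty$ by \cref{appendix:mon:lemma1}, the drift factor $w(t)-\lambda w(s)\,N_n(s,1]/(N_n(s,1]+\beta)$ converges a.s.\ to $w(t)-\lambda w(s)<0$, hence is eventually negative. Setting $T_m=\inf\{n\ge m:\ \text{drift factor}\ge 0\}$, the stopped process $(R_{n\wedge T_m})_{n\ge m}$ is a genuine non-negative supermartingale and converges a.s.\ for every $m$; on the a.s.\ event that the drift factor is negative from some $m_0$ on, $T_{m_0}=\infty$ and the unstopped $R_n$ converges to a finite limit. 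If you replace your concluding step by this argument (keeping your drift computation, which is correct modulo taking the expectation over $X_n$ so that the $w(t)P_{n-1}[0,t]$ and $w(s)P_{n-1}(s,1]$ terms both carry the common factor $N_{n-1}[0,t]/(N_{n-1}\,N_{n-1}(s,1]^{\lambda})$), the proof closes. Your observation about the degenerate case $\nu[0,t]=0$ is correct but not needed, since the supermartingale argument covers it.
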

\begin{proof}
Fix $t\in(0,1)$. Put $\lambda_t=(w(1)+w(t))/2w(1)$. Take $\lambda\in(\lambda_t,1]$ and $s\in(t,1)$ such that $w(s)>w(t)/\lambda_t$; such an $s$ exists as $w([0,1])$ is connected and $w(t)<w(t)/\lambda_t<w(1)$. Then, by \eqref{eq:appendix:dps:expected_weight},
\begingroup\allowdisplaybreaks
\begin{align*}
\mathbb{E}\biggl[\frac{N_{n+1}[0,t]}{N_{n+1}(s,1]^\lambda}-\frac{N_n[0,t]}{N_n(s,1]^\lambda}\Bigr|\mathcal{F}_n\biggr]&=\frac{N_n[0,t]}{N_n(s,1]^\lambda}\mathbb{E}\biggl[\frac{N_{n+1}[0,t]}{N_n[0,t]}\Bigl(1-\frac{W_{n+1}\mathbbm{1}_{\{X_{n+1}>s\}}}{N_{n+1}(s,1]}\Bigr)^\lambda-1\Bigr|\mathcal{F}_n\biggr]\\
&\leq\frac{N_n[0,t]}{N_n(s,1]^\lambda N_n}\Bigl\{w(t)-\lambda w(s)\frac{N_n(s,1]}{N_n(s,1]+\beta}\Bigr\},
\end{align*}
\endgroup
where we have used that $(1+x)^\lambda\leq1+\lambda x$ for $-1\leq x\leq 0$. Let us define, for $n,m\geq 0$,
$$R_n=w(t)-\lambda w(s)\frac{N_n(s,1]}{N_n(s,1]+\beta}\qquad\mbox{and}\qquad T_m=\inf\{n\geq m:R_n\geq 0\}.$$
Each $T_m$ is a $\mathcal{F}$-stopping time and $(N_{n\wedge T_m}[0,t]\bigr/N_{n\wedge T_m}(s,1]^\lambda)_{n\geq m}$ a non-negative supermatingale, so it converges to a random limit $L_m<\infty$, say, on $\Omega_m\in\mathcal{H}:\mathbb{P}(\Omega_m)=1$. Put $\Omega^*=\{\limsup_n R_n<0\}\cap\bigcup_{m}\Omega_m$. By \cref{appendix:mon:lemma1} and as $w(t)<\lambda w(s)$, we have $\limsup_n R_n<0$ a.s.$[\mathbb{P}]$, and thus $\mathbb{P}(\Omega^*)=1$. Take $\omega\in\Omega^*$. There exists $m_0\in\mathbb{N}$ such that $R_n(\omega)<0$, $n\geq m_0$. Then $T_{m_0}(\omega)=\infty$ and $N_n[0,t](\omega)\bigr/N_n(s,1](\omega)^\lambda\rightarrow L_{m_0}(\omega)$. Therefore, $\limN N_n[0,t]\bigr/N_n(s,1]^\lambda<\infty$ a.s.$[\mathbb{P}]$.
\end{proof}

\begin{lemma}\label{appendix:mon:lemma3}
Under the conditions in \cref{results:dps:mon}, if $\nu(t,1)>0$ for every $t\in(0,1)$, then it holds $N_n(\{1\}^c)\bigr/M_n(\{1\}^c)\overset{a.s.}{\longrightarrow}w(1).$
\end{lemma}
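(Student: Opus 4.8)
Since $w$ is continuous and strictly increasing on $[0,1]$, we have $\mathcal D=\{1\}$, so $\{1\}^c=[0,1)$; moreover the hypothesis $\nu(t,1)>0$ for every $t$ forces $\nu[0,1)=1-\nu(\{1\})>0$. The plan is to reduce the claim to a concentration property of the non-dominant observations. Writing $M_n(\{1\}^c)=1+\sum_{k=1}^n\delta_{X_k}[0,1)$ and $N_n(\{1\}^c)=\theta\nu[0,1)+\sum_{k=1}^nW_k\delta_{X_k}[0,1)$, one has
$$w(1)M_n(\{1\}^c)-N_n(\{1\}^c)=\bigl(w(1)-\theta\nu[0,1)\bigr)+\sum_{k=1}^n\bigl(w(1)-w(X_k)\bigr)\delta_{X_k}[0,1)+\mathcal M_n,$$
where $\mathcal M_n=\sum_{k=1}^n\bigl(w(X_k)-W_k\bigr)\delta_{X_k}[0,1)$ is, by \eqref{eq:appendix:dps:expected_weight}, an $\mathcal F$-martingale with $\langle\mathcal M\rangle_n\le\beta^2\sum_{k=1}^nP_{k-1}[0,1)$. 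Since $P_{k-1}[0,1)\ge\theta\nu[0,1)/N_{k-1}\ge\theta\nu[0,1)/(\theta+(k-1)\beta)$, the series $\sum_kP_{k-1}[0,1)$ diverges, so \citep[][Theorem 1]{dubins1965} gives $M_n(\{1\}^c)\rightarrow\infty$ and, in its sharper form, $M_n(\{1\}^c)-1\sim\sum_{k=1}^nP_{k-1}[0,1)$ a.s. Hence $\langle\mathcal M\rangle_n=O\bigl(M_n(\{1\}^c)\bigr)$, and the strong law for $L^2$-martingales gives $\mathcal M_n=o\bigl(M_n(\{1\}^c)\bigr)$; the first summand is trivially $o\bigl(M_n(\{1\}^c)\bigr)$ and the middle sum is non-negative (as $w\le w(1)$), so it suffices to prove that $M_n(\{1\}^c)^{-1}\sum_{k=1}^n(w(1)-w(X_k))\delta_{X_k}[0,1)\overset{a.s.}{\longrightarrow}0$.

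Next I would invoke continuity of $w$ at $1$: given $\epsilon>0$ there is $t_\epsilon\in(0,1)$ with $w(1)-w(x)<\epsilon$ for $x\in(t_\epsilon,1)$, so that $\sum_{k=1}^n(w(1)-w(X_k))\delta_{X_k}[0,1)\le(w(1)-w(0))M_n[0,t_\epsilon]+\epsilon M_n(\{1\}^c)$. Dividing by $M_n(\{1\}^c)$ and letting $\epsilon\downarrow0$, the claim reduces to the \emph{concentration estimate}
$$\frac{M_n[0,t]}{M_n(\{1\}^c)}\overset{a.s.}{\longrightarrow}0\qquad\text{for every }t\in(0,1).$$

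To obtain this, the key observation is that \cref{appendix:mon:lemma1,appendix:mon:lemma2} have \emph{open}-interval analogues under the present hypothesis: one re-runs their proofs with the half-open interval $(s,1]$ replaced by $(s,1)$. The replacement costs nothing — the extra predictive mass $P_n(\{1\})$ is absorbed into the same telescoping identity — and $N_n(s,1)\rightarrow\infty$ a.s. because $\nu(s,1)>0$ (so, $w$ being continuous and $w\ge w(s)>0$ on $(s,1)$, the Chen-type argument of \cref{appendix:mon:lemma1} applies). This yields, for each $t\in(0,1)$, some $s\in(t,1)$ and $\lambda<1$ with $N_n[0,t]\big/N_n(s,1)^\lambda$ converging a.s. to a finite limit; since $N_n(s,1)\rightarrow\infty$ and $\lambda<1$, this forces $N_n[0,t]=o\bigl(N_n(s,1)\bigr)$ and hence $N_n[0,t]/N_n[0,1)\overset{a.s.}{\longrightarrow}0$. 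I would then transfer this to the counting measures: on $\{\sum_kP_{k-1}[0,t]<\infty\}$ the sequence $M_n[0,t]$ is a.s. bounded and the concentration estimate is immediate, while on $\{\sum_kP_{k-1}[0,t]=\infty\}$ one has $M_n[0,t]-1\sim\sum_{k=1}^nP_{k-1}[0,t]$ a.s. (again by \citep{dubins1965}), and since $P_{k-1}[0,t]\big/P_{k-1}[0,1)=N_{k-1}[0,t]\big/N_{k-1}[0,1)\rightarrow0$ with $\sum_kP_{k-1}[0,1)=\infty$, the Toeplitz lemma gives $\sum_{k=1}^nP_{k-1}[0,t]=o\bigl(\sum_{k=1}^nP_{k-1}[0,1)\bigr)=o\bigl(M_n(\{1\}^c)\bigr)$, which is exactly the concentration estimate. (When $\nu(\{1\})=0$ none of this is needed: then $X_k\ne1$ for every $k$ a.s., so $M_n(\{1\}^c)=n+1$ and $N_n(\{1\}^c)=N_n$, and the assertion is just $N_n/(n+1)\rightarrow w(1)$, already contained in \cref{results:dps:mon}.)

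The main obstacle is the open-interval version of \cref{appendix:mon:lemma2}: one must verify carefully that deleting the atom at $1$ does not spoil the stopped-supermartingale construction there — in particular that an exponent $\lambda<1$ is still available, so that the gain over $N_n(s,1)$ is genuinely $o(\cdot)$ and not merely $O(\cdot)$ — and one must be careful to invoke the Dubins–Freedman and Toeplitz steps only on the probability-one events on which the relevant predictive series diverge.
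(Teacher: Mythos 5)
Your proof is correct, but it takes a genuinely different route from the paper's. You work on the original time scale and decompose $w(1)M_n(\{1\}^c)-N_n(\{1\}^c)$ into a bounded term, a non-negative bias term $\sum_k(w(1)-w(X_k))\delta_{X_k}[0,1)$, and the martingale $\mathcal M_n$, killing the martingale via its bracket and the bias via the concentration estimate $M_n[0,t]/M_n(\{1\}^c)\overset{a.s.}{\longrightarrow}0$, which you extract from open-interval analogues of \cref{appendix:mon:lemma1,appendix:mon:lemma2} together with the ratio form of Dubins--Freedman and the Toeplitz lemma. The paper instead first shows $\mathbb{E}[W_{n+1}\delta_{X_{n+1}}(\{1\}^c)|\mathcal F_n]\big/P_n(\{1\}^c)\overset{a.s.}{\longrightarrow}w(1)$ (using the very same ``slightly modified'' \cref{appendix:mon:lemma2} to get $N_n[0,t)/N_n(\{1\}^c)\to0$, so the step you flag as the main obstacle is one the paper itself takes, without spelling it out), then extracts the subsequence of non-dominant draws via the stopping times $T_n$, verifies that the renormalized restricted predictive $P^*_n$ is the conditional law of $X^*_{n+1}$ along that subsequence, and concludes by the strong law of \citep[Lemma~2]{berti2011} applied to $\frac1n\sum_kW^*_k$. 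The trade-off: your argument avoids the change-of-time/change-of-filtration machinery entirely, at the price of the extra Dubins--Freedman-plus-Toeplitz transfer from predictive ratios to counting-measure ratios; the paper's subsequence construction is heavier here but is recycled almost verbatim in the proofs of \cref{results:dps:arbitrary,results:clt:arbitrary:discont}, so it is the more economical choice globally. Both arguments are sound; just make sure, as you note, to pick $\lambda\in(\lambda_t,1)$ strictly less than one so that $N_n[0,t]=o(N_n(s,1))$ rather than $O(N_n(s,1))$, and to invoke Dubins--Freedman only on the events where the relevant predictive series diverge.
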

\begin{proof}
Fix $t\in(0,1)$. It follows from $\nu(t,1)>0$ and a slightly modified version of \cref{appendix:mon:lemma2} that $N_n[0,t)\bigl/N_n(\{1\}^c)\overset{a.s.}{\longrightarrow}0$. Then
$$\liminfN\frac{1}{P_n(\{1\}^c)}\mathbb{E}[W_{n+1}\delta_{X_{n+1}}(\{1\}^c)|\mathcal{F}_n]\geq\liminfN w(t)\frac{P_n[t,1)}{P_n(\{1\}^c)}=w(t)\qquad\mbox{a.s.}[\mathbb{P}].$$
Since $\limsupN\mathbb{E}[W_{n+1}\delta_{X_{n+1}}(\{1\}^c)|\mathcal{F}_n]\bigr/P_n(\{1\}^c)\leq w(1)$, letting $t\uparrow1$, we get
\begin{equation}\label{eq:appendix:dps:mon:subsequence}
\frac{1}{P_n(\{1\}^c)}\mathbb{E}[W_{n+1}\delta_{X_{n+1}}(\{1\}^c)|\mathcal{F}_n]\overset{a.s.}{\longrightarrow} w(1).
\end{equation}

Define $T_0=0$ and, for every $n\geq1$,
$$T_n=\inf\{m>T_{n-1}:X_m\neq 1\}.$$
As $P_n(\{1\}^c)>0$, then $M_n(\{1\}^c)\overset{a.s.}{\longrightarrow}\infty$ by \citep[][Theorem 1]{dubins1965}, and so $\mathbb{P}(X_n\neq 1\textnormal{ i.o.})=1$. Therefore, each $T_n$ is a $\mathcal{F}$-stopping time such that $T_n<\infty$ a.s.$[\mathbb{P}]$, and $T_n\overset{a.s.}{\longrightarrow}\infty$.

We further let, for $n\geq 1$,
$$X^*_n=X_{T_n},\quad W^*_n=W_{T_n},\quad P^*_n(\cdot)=\frac{P_{T_{n+1}-1}(\cdot\cap \{1\}^c)}{P_{T_{n+1}-1}(\{1\}^c)},\quad\mathcal{F}^*_n=\sigma(X^*_1,W^*_1,\ldots,X^*_n,W^*_n),$$
which are well-defined on the probability space $(\Omega^*,\mathcal{H}^*,\mathbb{P}^*)\equiv(\Omega^*,\mathcal{H}\cap\Omega^*,\mathbb{P}(\cdot\,|\Omega^*))$, where $\Omega^*\equiv\bigcap_{n=0}^\infty\{T_n<\infty\}$ and $\mathbb{P}(\Omega^*)=1$. Fix $n\in\mathbb{N}$. Take $B\in\mathcal{B}[0,1]$. Then
$$P^*_n(B)=\frac{\theta\nu(B\cap\{1\}^c)+\sum_{i=1}^{T_{n+1}-1}W_i\delta_{X_i}(B\cap\{1\}^c)}{\theta\nu(B\cap\{1\}^c)+\sum_{i=1}^{T_{n+1}-1}W_i\delta_{X_i}(\{1\}^c)}=\frac{\theta\nu(B\cap\{1\}^c)+\sum_{k=1}^nW^*_k\delta_{X^*_k}(B)}{\theta\nu(\{1\}^c)+\sum_{k=1}^nW^*_k};$$
thus, $P^*_n(B)$ is $\mathcal{F}^*_n$-measurable. Let $A\in\mathcal{F}^*_n$. Since $\mathbbm{1}_{A}=f(X^*_1,W^*_1,\ldots,X^*_n,W^*_n)$ for some measurable function $f$, we can show that
\begingroup\allowdisplaybreaks
\begin{align*}
\mathbb{P}^*\bigl(A\cap\{X^*_{n+1}\in B\}\bigr)&=\mathbb{E}\bigl[f(X_{T_1},W_{T_1},\ldots,X_{T_n},W_{T_n})\mathbbm{1}_{\{X_{T_{n+1}}\in B\}\cap\Omega^*}\bigr]\\
&=\mathbb{E}\Bigl[f(X_{T_1},W_{T_1},\ldots,X_{T_n},W_{T_n})\frac{P_{T_{n+1}-1}(B\cap\{1\}^c)}{P_{T_{n+1}-1}(\{1\}^c)}\mathbbm{1}_{\Omega^*}\Bigr]=\mathbb{E}^*[\mathbbm{1}_{A}\cdot P^*_n(B)].
\end{align*}
\endgroup
Therefore, $P^*_n$ is a version of the conditional distribution of $X^*_{n+1}$ given $\mathcal{F}^*_n$. Then, by \eqref{eq:appendix:dps:expected_weight} and \eqref{eq:appendix:dps:mon:subsequence},
$$\mathbb{P}^*\bigl(\limN\mathbb{E}^*[w(X^*_{n+1})|\mathcal{F}^*_n]=w(1)\bigr)=1.$$
Moreover, $\sumINF\mathbb{E}^*[(W^*_n)^2]/n^2<\infty$, so $\mathbb{P}^*(\frac{1}{n}\sum_{k=1}^nW^*_k\rightarrow w(1))=1$ by \citep[][Lemma 2]{berti2011}. As a result,
$$\frac{N_n(\{1\}^c)}{M_n(\{1\}^c)}=\frac{\theta\nu(\{1\}^c)}{M_n(\{1\}^c)}+\frac{\sum_{k=1}^{M_n(\{1\}^c)-1}W_{T_k}}{M_n(\{1\}^c)}\longrightarrow w(1)\qquad\mbox{a.s.}[\mathbb{P}].$$
\end{proof}

\begin{proof}[\textnormal{\textbf{Proof of \cref{results:dps:mon}}}]~ \\

\noindent\emph{Part I.} ($P_n\overset{w}{\longrightarrow}\delta_1$ a.s.[$\mathbb{P}$], $\hat{P}_n\overset{w}{\longrightarrow}\delta_1$ a.s.[$\mathbb{P}$], $\frac{N_n}{n}\overset{a.s.}{\longrightarrow}w(1)$).
Let $t\in(0,1)$. Take $s\in(t,1)$ and $\lambda\in(0,1)$ as in the statement of \cref{appendix:mon:lemma2}. It follows from \cref{appendix:mon:lemma1,appendix:mon:lemma2} that
$$\limsupN P_n[0,t]\leq\limsupN\frac{N_n[0,t]}{N_n(t,1]}\leq\limsupN\frac{1}{N_n(s,1]^{1-\lambda}}\frac{N_n[0,t]}{N_n(s,1]^\lambda}=0\quad\textnormal{a.s.}[\mathbb{P}];$$
thus, $P_n(t,1]\overset{a.s.}{\longrightarrow}1$ for every $t\in(0,1)$, and so $P_n\overset{w}{\longrightarrow}\delta_1$ a.s.$[\mathbb{P}]$. By \citep[][Lemma 2]{berti2011}, $\hat{P}_n(t,1]\overset{a.s.}{\longrightarrow}1$ for every $t\in(0,1)$ and, similarly, $\hat{P}_n\overset{w}{\longrightarrow}\delta_1$ a.s.$[\mathbb{P}]$. 

On the other hand, \eqref{eq:appendix:dps:expected_weight} and the a.s. weak convergence of $P_n$ to $\delta_1$ imply $\mathbb{E}[W_{n+1}|\mathcal{F}_n]\overset{a.s.}{\longrightarrow}w(1)$. As $\sumINF\mathbb{E}[W_n^2]/n^2\leq\sumINF\beta^2/n^2<\infty$, then, by \citep[][Lemma 2]{berti2011},
\begin{equation}\label{eq:appendix:dps:mon:rate1}
\frac{N_n}{n}\overset{a.s.}{\longrightarrow}w(1).
\end{equation}

\noindent\emph{Part II.} ($P_n(\{1\}^c)\overset{a.s.}{\longrightarrow}\eta$). Suppose $0<\nu(\{1\})<1$; else, if $\nu(\{1\})=0$ (or $\nu(\{1\})=1$), then $P_n(\{1\}^c)=1$ ($P_n(\{1\}^c)=0$), and so $P_n(\{1\}^c)\overset{a.s.}{\longrightarrow}\eta$ with $\eta=1$ ($\eta=0$).\\

\noindent\emph{Case 1: $\nu(t,1)=0$ for some $t\in(0,1)$}.
It follows that $P_n(\{1\}^c)=P_n[0,t]$, so Part I implies $\limsup_n P_n[0,t]\leq\delta_1([0,t])=0$ a.s.$[\mathbb{P}]$, and thus $P_n(\{1\}^c)\overset{a.s.}{\longrightarrow}0$.\\

\noindent\emph{Case 2: $\nu(t,1)>0$ for each $t\in(0,1)$}.
By \cref{appendix:mon:lemma3}, $N_n(\{1\}^c)\bigl/M_n(\{1\}^c)\overset{a.s.}{\longrightarrow} w(1)$ and, similarly, $N_n(\{1\})\bigl/M_n(\{1\})\overset{a.s.}{\longrightarrow}w(1)$. Let $\psi\in(0,1)$. Then
\begingroup\allowdisplaybreaks
\begin{align*}
\mathbb{E}\Bigl[\frac{M_{n+1}(\{1\})^\psi}{M_{n+1}(\{1\}^c)}-\frac{M_n(\{1\})^\psi}{M_n(\{1\}^c)}\Bigr|\mathcal{F}_n\Bigr]&=\frac{M_n(\{1\})^\psi}{M_n(\{1\}^c)}\mathbb{E}\biggl[\Bigl(1+\frac{\delta_{X_{n+1}}(\{1\})}{M_n(\{1\})}\Bigr)^\psi\frac{M_n(\{1\}^c)}{M_{n+1}(\{1\}^c)}-1\Bigr|\mathcal{F}_n\biggr]\\
&\leq\frac{M_n(\{1\})^\psi}{M_n(\{1\}^c)N_n}\Bigl\{\psi\frac{N_n(\{1\})}{M_n(\{1\})}-\frac{N_n(\{1\}^c)}{M_n(\{1\}^c)+1}\Bigr\},
\end{align*}
\endgroup
using that $(1+x)^\psi\leq 1+\psi x$ for $0\leq x\leq 1$. Since
$$\limsupN\Bigl(\psi\frac{N_n(\{1\})}{M_n(\{1\})}-\frac{N_n(\{1\}^c)}{M_n(\{1\}^c)+1}\Bigr)<0\qquad\mbox{a.s.}[\mathbb{P}],$$
we can show, arguing as in the proof of \cref{appendix:mon:lemma2}, that $(M_n(\{1\})^\psi\bigr/M_n(\{1\}^c))_{n\geq 1}$ converges a.s.$[\mathbb{P}]$. But $\psi$ is arbitrary and $0<(1+\psi)/2<1$, so it holds that 
$$\frac{M_n(\{1\})^\psi}{M_n(\{1\}^c)}=M_n(\{1\})^\frac{\psi-1}{2}\frac{M_n(\{1\})^\frac{1+\psi}{2}}{M_n(\{1\}^c)}\overset{a.s.}{\longrightarrow}0.$$
Analogously, $M_n(\{1\}^c)^\psi\bigl/M_n(\{1\})\overset{a.s.}{\longrightarrow}0$. Therefore, for every $\psi<1$,
\begin{equation}\label{eq:appendix:dps:mon:rate2}
\frac{n^\psi}{N_n(\{1\})}=\Bigl(\frac{M_n(\{1\}^c)+M_n(\{1\})-2}{M_n(\{1\})^{1/\psi}}\Bigr)^\psi\frac{M_n(\{1\})}{N_n(\{1\})}\overset{a.s.}{\longrightarrow}0\qquad\mbox{and}\qquad\frac{n^\psi}{N_n(\{1\}^c)}\overset{a.s.}{\longrightarrow}0.
\end{equation}
Define $Z_n=\log\frac{N_n(\{1\}^c)}{N_n(\{1\})}$, $n\geq0$. By \citep[][Lemma 3.2]{pemantle1999}, $(Z_n)_{n\geq 0}$ converges $\mathbb{P}$-a.s. in $[-\infty,\infty)$ if
\begin{equation}\label{eq:appendix:dps:mon:result}
\sum_{n=0}^\infty\mathbb{E}[Z_{n+1}-Z_n|\mathcal{F}_n]<\infty\;\;\textnormal{a.s.}[\mathbb{P}]\qquad\textnormal{and}\qquad\sum_{n=0}^\infty\mathbb{E}\bigl[(Z_{n+1}-Z_n)^2\bigr|\mathcal{F}_n\bigr]<\infty\;\;\textnormal{a.s.}[\mathbb{P}].
\end{equation}
In that case, $(N_n(\{1\}^c)\bigr/N_n(\{1\}))_{n\geq 0}$ converges $\mathbb{P}$-a.s. in $[0,\infty)$ and there exists a random variable $\eta\in[0,1)$ such that
$$P_n(\{1\}^c)=\frac{N_n(\{1\}^c)}{N_n(\{1\}^c)+N_n(\{1\})}\overset{a.s.}{\longrightarrow}\eta.$$
To prove \eqref{eq:appendix:dps:mon:result}, observe that
\begingroup\allowdisplaybreaks
\begin{align*}
\mathbb{E}[Z_{n+1}-Z_n|\mathcal{F}_n]&=\mathbb{E}\biggl[\log\frac{N_{n+1}(\{1\}^c)}{N_n(\{1\}^c)}\delta_{X_{n+1}}(\{1\}^c)-\log\frac{N_{n+1}(\{1\})}{N_n(\{1\})}\delta_{X_{n+1}}(\{1\})\Bigr|\mathcal{F}_n\biggr]\\
&=\mathbb{E}\biggl[\int_0^{W_{n+1}}\frac{\delta_{X_{n+1}}(\{1\}^c)}{N_n(\{1\}^c)+t}dt-\int_0^{W_{n+1}}\frac{\delta_{X_{n+1}}(\{1\})}{N_n(\{1\})+t}dt\Bigr|\mathcal{F}_n\biggr]\\
&\begin{aligned}\leq\mathbb{E}\biggl[\delta_{X_{n+1}}(\{1\}^c)\Bigl(\frac{W_{n+1}}{N_n(\{1\}^c)}&-\frac{W_{n+1}^2}{2N_n(\{1\}^c)^2}+k_1\frac{W_{n+1}^3}{3N_n(\{1\}^c)^3}\Bigr)\Bigr|\mathcal{F}_n\biggr]\\
&-\mathbb{E}\biggl[\delta_{X_{n+1}}(\{1\})\Bigl(\frac{W_{n+1}}{N_n(\{1\})}-k_2\frac{W_{n+1}^2}{2N_n(\{1\})^2}\Bigr)\Bigr|\mathcal{F}_n\biggr]\end{aligned}\\
&\leq\frac{1}{N_n}\Bigl(k_1\frac{\beta^3}{3N_n(\{1\}^c)^2}+k_2\frac{\beta^2}{2N_n(\{1\})}\Bigr),
\end{align*}
\endgroup
where we have used a Taylor expansion of the function $f(x)=1/(x+t)$, $x>0$ with $t\in\mathbb{R}_+$ fixed, for some constants $k_1,k_2\geq0$. Then \eqref{eq:appendix:dps:mon:rate1}-\eqref{eq:appendix:dps:mon:rate2} imply $\sum_{n=0}^\infty\mathbb{E}[Z_{n+1}-Z_n|\mathcal{F}_n]<\infty$ a.s.$[\mathbb{P}]$. On the other hand,
\begingroup\allowdisplaybreaks
\begin{align*}
\mathbb{E}\bigl[(Z_{n+1}-Z_n)^2\bigr|\mathcal{F}_n\bigr]&\leq\mathbb{E}\biggl[\frac{\delta_{X_{n+1}}(\{1\}^c)}{N_n(\{1\}^c)^2}\Bigl(\int_0^{W_{n+1}}dt\Bigr)^2+\frac{\delta_{X_{n+1}}(\{1\})}{N_n(\{1\})^2}\Bigl(\int_0^{W_{n+1}}dt\Bigr)^2\Bigr|\mathcal{F}_n\biggr]\\
&\leq P_n(\{1\}^c)\frac{\beta^2}{N_n(\{1\}^c)^2}+P_n(\{1\})\frac{\beta^2}{N_n(\{1\})^2},
\end{align*}
\endgroup
and thus $\sum_{n=0}^\infty\mathbb{E}[(Z_{n+1}-Z_n)^2|\mathcal{F}_n]<\infty$ a.s.$[\mathbb{P}]$.
\end{proof}

\begin{proof}[\textnormal{\textbf{Proof of \cref{results:dps:arbitrary}}}]
Let us define $g(x)=w(x)/\bar{w}$, $x\in\mathbb{X}$ and, for $n\geq 1$ and $A\in\mathcal{B}[0,1]$,
$$\tilde{X}_n=g(X_n),\qquad \tilde{W}_n=W_n,\qquad \tilde{P}_n(A)=P_n(g^{-1}(A)),\qquad\tilde{\nu}(A)=\nu(g^{-1}(A)).$$
Then $\mathbb{E}[\tilde{W}_n|\tilde{X}_n]=\bar{w}\tilde{X}_n$ and, for every $t\in(0,1)$, by \eqref{eq:def:condition},
$$\tilde{\nu}(t,1]=\nu(\{x\in\mathbb{X}:w(x)>t\bar{w}\})=\nu_w(t\bar{w},\bar{w}]>0;$$
thus, $1\in\mbox{supp}(\tilde{\nu})$. On the other hand, for every $A\in\mathcal{B}[0,1]$,
$$\tilde{P}_n(A)=\sumN\frac{\tilde{W}_i}{\theta+\sum_{j=1}^n\tilde{W}_j}\delta_{\tilde{X}_i}(A)+\frac{\theta}{\theta+\sum_{j=1}^n\tilde{W}_j}\tilde{\nu}(A).$$
Therefore, $(\tilde{X}_n)_{n\geq 1}$ is\footnote{Strictly speaking, $(\tilde{X}_n)_{n\geq 1}$ differs from the definition of a DPS in that $\tilde{W}_n$ is a function of $(X_n,U_n)$ but not of $(\tilde{X}_n,U_n)$, and $\tilde{P}_n$ is the conditional distribution of $\tilde{X}_{n+1}$ given $\mathcal{F}_n$ instead of $\sigma(\tilde{X}_1,U_1,\ldots,\tilde{X}_n,U_n)$. Nevertheless, the conclusions of \cref{results:dps:mon} continue to hold for the process $(\tilde{X}_n)_{n\geq 1}$ since $\mathbb{E}[\tilde{W}_n|\mathcal{F}_{n-1}\vee\sigma(X_n)]=\tilde{w}(\tilde{X}_n)$.} a $[0,1]$-valued DPS with the continuous and strictly increasing expected weight function $\tilde{w}(t)=\bar{w}t$, $t\in[0,1]$. It follows immediately from \cref{results:dps:mon} that
$$\frac{1}{n}\sumN W_i=\frac{1}{n}\sumN \tilde{W}_i\overset{a.s.}{\longrightarrow}\tilde{w}(1)=\bar{w}.$$

\noindent\emph{Part I.} ($P_n(\mathcal{D}_\delta^c)\overset{a.s.}{\longrightarrow}0$). 
Fix $\delta>0$. Let $\epsilon=(\bar{w}-\sup_{x\in \mathcal{D}^c_\delta}w(x))/2$. Then $\epsilon>0$ by \eqref{eq:def:condition}, so \cref{results:dps:mon} implies
$$P_n(\mathcal{D}_\delta^c)\leq\mathbb{P}\bigl(w(X_{n+1})<\bar{w}-\epsilon\,|\mathcal{F}_n\bigr)=\tilde{P}_n[0,1-\epsilon/\bar{w})\overset{a.s.}{\longrightarrow} 0.$$

\noindent\emph{Part II.} ($P_n(\mathcal{D}^c)\overset{a.s.}{\longrightarrow}\eta$). By \cref{results:dps:mon}, there exists a random variable $\eta\in[0,1]$ such that
$$P_n(\mathcal{D}^c)=\tilde{P}_n(\{1\}^c)\overset{a.s.}{\longrightarrow}\eta.$$
If it holds $\bar{w}>\bar{w}^c$, then there exists $t\in(0,1)$ such that $\nu_w(t\bar{w},\bar{w})=0$. But $\tilde{\nu}(t,1)=\nu_w(t\bar{w},\bar{w})=0$, so $P_n(\mathcal{D}^c)=\tilde{P}_n[0,t]\overset{a.s.}{\longrightarrow}0$.\\

\noindent\emph{Part III.} ($\|P_n-\tilde{P}\|\overset{a.s.}{\longrightarrow}0$, $\|\hat{P}_n-\tilde{P}\|\overset{a.s.}{\longrightarrow}0$). Suppose $\bar{w}>\bar{w}^c$. It follows from Part II that $P_n(\mathcal{D})\overset{a.s.}{\longrightarrow}1$, so $\hat{P}_n(\mathcal{D})\overset{a.s.}{\longrightarrow}1$ by \citep[][Lemma 2]{berti2011}, and thus $\mathbb{P}(X_n\in\mathcal{D}\textnormal{ i.o.})=1$ by \cite[][Theorem 1]{dubins1965}.

Let us define $T=0$ and, for $n\geq1$,
$$T_n=\inf\{m\in\mathbb{N}:m>T_{n-1},X_m\in\mathcal{D}\}\qquad\textnormal{and}\qquad P_n(\cdot\,|\mathcal{D})=P_n(\cdot\cap\mathcal{D})\bigr/P_n(\mathcal{D}).$$
Put $\Omega^*=\bigcap_{n=0}^\infty\{T_n<\infty\}$. Each $T_n$ is a $\mathcal{F}$-stopping time such that $T_n<\infty$ a.s.$[\mathbb{P}]$, and $T_n\overset{a.s.}{\longrightarrow}\infty$; thus, $\mathbb{P}(\Omega^*)=1$. Then, arguing as in \cref{appendix:mon:lemma3}, one can show that $P_n^*(\cdot)\equiv P_{T_{n+1}-1}(\cdot\,|\mathcal{D})$ is a version of the conditional distribution of $X_{T_{n+1}}$ given $\sigma(X_{T_1},W_{T_1},\ldots,X_{T_n},W_{T_n})$ on the probability space $(\Omega^*,\mathcal{H}^*,\mathbb{P}^*)\equiv(\Omega^*,\mathcal{H}\cap\Omega^*,\mathbb{P}(\cdot\,|\Omega^*))$. Moreover, the sequence $(X_{T_n})_{n\geq1}$ is a DPS with a constant expected weight function, so there exists by \cref{results:dps:all} a random probability measure $\tilde{P}^*$ on $\mathcal{D}$ such that 
$$\mathbb{P}^*\bigl(\limN\|P_n^*-\tilde{P}^*\|=0\bigr)=1.$$
Define $\tilde{P}(A)(\omega)=\tilde{P}^*(A\cap\mathcal{D})(\omega)$, $\omega\in\Omega^*$ and $\tilde{P}(A)(\omega)=0$, $\omega\notin\Omega^*$, $A\in\mathcal{X}$. Then $\tilde{P}$ is a random probability measure on $\mathbb{X}$ such that $\tilde{P}(\mathcal{D})=1$ a.s.$[\mathbb{P}]$. On the other hand, $P_n(\cdot\,|\mathcal{D})\equiv P_{m_n}^*(\cdot)$ on $\Omega^*$, where $m_n=M_n(\mathcal{D})-1$, so
$$\mathbb{P}\bigl(\limN\|P_n(\cdot\,|\mathcal{D})-\tilde{P}(\cdot)\|=0\bigr)=\mathbb{P}^*\bigl(\limN\|P_{m_n}^*-\tilde{P}^*\|=0\bigr)=1.$$
Therefore,
$$\|P_n-\tilde{P}\|\leq2\,P_n(\mathcal{D}^c)+\|P_n(\cdot\,|\mathcal{D})-\tilde{P}(\cdot)\|\overset{a.s.}{\longrightarrow}0.$$
Finally, denote by $\hat{P}_n(\cdot\,|\mathcal{D})=\hat{P}_n(\cdot\cap\mathcal{D})\bigr/\hat{P}_n(\mathcal{D})$, $n\geq0$ the relative frequency of $(X_n)_{n\geq1}$ restricted to $\mathcal{D}$, which is well-defined for large $n$. Then, arguing as before, we have
$$\|\hat{P}_n-\tilde{P}\|\leq2\,\hat{P}_n(\mathcal{D}^c)+\|\hat{P}_n(\cdot\,|\mathcal{D})-\tilde{P}(\cdot)\|\overset{a.s.}{\longrightarrow}0.$$
\end{proof}

\begin{proof}[\textnormal{\textbf{Proof of \cref{results:dps:arbitrary:number}}}]
Define $\theta_n=\theta/N_n$ and $Z_n=L_n-L_{n-1}$, $n\geq 1$, with $L_0=0$ and $\theta_0=1$. As $\nu$ is diffuse and $Z_n\in\{0,1\}$, then $L_n=\sumN\mathbbm{1}_{\{Z_i=1\}}$ and $\theta_n=\mathbb{P}(Z_{n+1}=1|\mathcal{F}_n)$. Moreover, $\theta_n\geq\theta/(\theta+\beta n)$, so $\sumINF\mathbb{P}(Z_n=1|\mathcal{F}_{n-1})=\infty$ a.s.$[\mathbb{P}]$ and, by \citep[][Theorem 1]{dubins1965},
$$\frac{L_n}{\sum_{k=1}^n\theta_{k-1}}=\frac{\sumN\mathbbm{1}_{\{Z_i=1\}}}{\sum_{k=1}^n\mathbb{P}(Z_k=1|\mathcal{F}_{k-1})}\overset{a.s.}{\longrightarrow}1.$$
As $N_n/n\overset{a.s.}{\longrightarrow}\bar{w}$ by \cref{results:dps:arbitrary}, we obtain
$$\frac{L_n}{\log n}=\frac{L_n}{\sum_{k=1}^n\theta_{k-1}}\biggl(\frac{1}{\log n}+\frac{\theta}{\log n}\sum_{k=1}^{n-1}\frac{1}{k}\Bigl(\frac{k}{N_k}\Bigr)\biggr)\overset{a.s.}{\longrightarrow}\frac{\theta}{\bar{w}},$$
using the fact that, for any $(a_n)_{n\geq1}\subseteq\mathbb{R}_+:a_n\rightarrow a$, it holds $\frac{1}{\log n}\sum_{k=1}^nk^{-1}a_k\rightarrow a$.
\end{proof}

\begin{proof}[\textnormal{\textbf{Proof of \cref{results:clt:arbitrary:rate}}}]
If $\nu(\mathcal{D}^c)=0$, the result is immediate with $\xi=0$. Suppose $\nu(\mathcal{D}^c)>0$. Then $\bar{w}^c\in\mbox{supp}((\nu_{|\mathcal{D}^c})_w)$ from the definition of $\bar{w}^c$. Moreover, $\nu(\{x\in\mathcal{D}^c:w(x)>\bar{w}^c\})=0$; otherwise, $(\bar{w}^c,\bar{w})\cap\mbox{supp}((\nu_{|\mathcal{D}^c})_w)\neq\emptyset$ and there exists $u>\bar{w}^c$ such that $u\in\mbox{supp}((\nu_{|\mathcal{D}^c})_w)$, absurd. As a result, $\bar{w}^c$ plays the same role for the subsequence of non-dominant observations as $\bar{w}$ does for $(X_n)_{n\geq 1}$. Therefore, by \cref{results:dps:arbitrary}, $N_n(\mathcal{D}^c)\bigl/M_n(\mathcal{D}^c)\overset{a.s.}{\longrightarrow}\bar{w}^c$. Proceeding as in Part II of the proof of \cref{results:dps:mon}, we can show that, for every $\psi<\bar{w}^c/\bar{w}$,
$$\frac{M_n(\mathcal{D})^\psi}{M_n(\mathcal{D}^c)}\overset{a.s.}{\longrightarrow}0,\qquad\mbox{and then}\qquad\frac{M_n(\mathcal{D}^c)}{n^\psi}\overset{a.s.}{\longrightarrow}\infty.$$
In addition, $\log\frac{N_n(\mathcal{D}^c)}{N_n(\mathcal{D})^{\bar{w}^c/\bar{w}}}$ converges $\mathbb{P}$-a.s. in $[-\infty,\infty)$, so there exists a random variable $\xi\in[0,\infty)$ such that, letting $\gamma=1-\bar{w}^c/\bar{w}$,
\begingroup\allowdisplaybreaks
\begin{gather*}
n^\gamma\cdot\hat{P}_n(\mathcal{D}^c)=\frac{M_n(\mathcal{D}^c)-1}{N_n(\mathcal{D}^c)}\Bigl(\frac{N_n}{n}\frac{N_n(\mathcal{D}^c)^{\bar{w}/\bar{w}^c}}{N_n(\mathcal{D})+N_n(\mathcal{D}^c)}\Bigr)^{\bar{w}^c/\bar{w}}\overset{a.s.}{\longrightarrow}\xi,\\
n^\gamma\cdot P_n(\mathcal{D}^c)=n^\gamma\cdot\Bigl(\hat{P}_n(\mathcal{D}^c)-\frac{1}{n}\Bigr)\frac{n}{N_n}\frac{N_n(\mathcal{D}^c)}{M_n(\mathcal{D}^c)}\overset{a.s.}{\longrightarrow}\frac{\bar{w}^c}{\bar{w}}\xi.
\end{gather*}
\endgroup
\end{proof}

\begin{proof}[\textnormal{\textbf{Proof of \cref{results:clt:arbitrary:discont}}}]
By \cref{results:dps:arbitrary}, $P_n(\mathcal{D})\overset{a.s.}{\longrightarrow}1$, so we can proceed as in \cref{results:dps:arbitrary} to derive results for the subsequence of dominant observations. Let us define $T_0=0$ and, for $n\geq1$
$$T_n=\inf\{n>T_{n-1}:X_n\in\mathcal{D}\},\qquad P_n(\cdot\,|\mathcal{D})=\frac{P_n(\cdot\cap\mathcal{D})}{P_n(\mathcal{D})},\qquad\hat{P}_n(\cdot\,|\mathcal{D})=\frac{\hat{P}_n(\cdot\cap\mathcal{D})}{\hat{P}_n(\mathcal{D})},$$
which are well-defined for large $n$. We further let $m_n=M_n(\mathcal{D})-1$ and $\mathcal{G}=(\mathcal{G}_n)_{n\geq0}$ be the filtration on $\mathcal{H}$, given by $\mathcal{G}_0=\{\emptyset,\Omega\}$ and $\mathcal{G}_n=\sigma(X_{T_1},U_{T_1},\ldots,X_{T_{m_n}},U_{T_{m_n}})$, $n\geq1$. With a slight abuse of notation, $(X_{T_{m_n}})_{n\geq 1}$ forms a subsequence of dominant observations,
$$P_n(\cdot\,|\mathcal{D})=\frac{\theta\nu(\cdot\cap\mathcal{D})+\sum_{k=1}^{m_n}W_{T_k}\delta_{X_{T_k}}(\cdot)}{\theta\nu(\mathcal{D})+\sum_{k=1}^{m_n}W_{T_k}},$$
is a version of the conditional distribution of $X_{T_{m_{n+1}}}$ given $\mathcal{G}_n$ (see the proof of \cref{results:dps:arbitrary}), and $(\hat{P}_n(\cdot\,|\mathcal{D}))_{n\geq1}$ is the relative frequency of $(X_{T_{m_n}})_{n\geq 1}$ restricted to $\mathcal{D}$. Observe that $P_n(\cdot\,|\mathcal{D})$ and $\hat{P}_n(\cdot\,|\mathcal{D})$ remain unchanged unless $m_n>m_{n-1}$. It follows from \cref{results:dps:all} that, for every $A\in\mathcal{X}$,
$$P_n(A|\mathcal{D})\overset{a.s.}{\longrightarrow}\tilde{P}(A)\qquad\mbox{and}\qquad\hat{P}_n(A|\mathcal{D})\overset{a.s.}{\longrightarrow}\tilde{P}(A).$$

We can prove the convergence to Gaussian limits of 
\begin{equation}\label{eq:appendix:clt:arbitrary:difference}
\sqrt{m_n}\bigl(\hat{P}_n(A|\mathcal{D})-P_n(A|\mathcal{D})\bigr)\qquad\mbox{and}\qquad\sqrt{m_n}\bigl(P_n(A|\mathcal{D})-\tilde{P}(A)\bigr),
\end{equation}
by applying \citep[][Theorem 1]{berti2011} and \citep[][Proposition 1]{berti2011}, respectively. Indeed, one can easily show that \eqref{eq:appendix:clt:arbitrary:difference} satisfy the assumptions of \citep[][Theorem 1, Proposition 1]{berti2011} using techniques from \citep[][Theorem 4]{berti2010} and \citep[][Corollary 3]{berti2011}. As a result,
\begin{gather*}
\sqrt{m_n}\bigl(\hat{P}_n(A|\mathcal{D})-P_n(A|\mathcal{D})\bigr)\overset{stably}{\longrightarrow}\mathcal{N}\bigl(0,U(A)\bigr),\\
\sqrt{m_n}\bigl(P_n(A|\mathcal{D})-\tilde{P}(A)\bigr)\overset{a.s.cond.}{\longrightarrow}\mathcal{N}\bigl(0,V(A)\bigr)\quad\textnormal{w.r.t. }\mathcal{G}.
\end{gather*}  
Since $M_n(\mathcal{D})/n\overset{a.s.}{\longrightarrow}1$, a generalized Slutsky's theorem (see also \citep[][Theorem 6]{fortini2020}) implies
\begin{gather*}
\sqrt{n}\bigl(\hat{P}_n(A|\mathcal{D})-P_n(A|\mathcal{D})\bigr)\overset{stably}{\longrightarrow}\mathcal{N}\bigl(0,U(A)\bigr),\\
\sqrt{n}\bigl(P_n(A|\mathcal{D})-\tilde{P}(A)\bigr)\overset{a.s.cond.}{\longrightarrow}\mathcal{N}\bigl(0,V(A)\bigr)\quad\textnormal{w.r.t. }\mathcal{G}.
\end{gather*}
But $\tilde{P}(A)$ is $\sigma(X_{T_{m_1}},X_{T_{m_2}},\ldots)$-measurable from $\hat{P}_n(A|\mathcal{D})\overset{a.s.}{\longrightarrow}\tilde{P}(A)$ and, using \eqref{eq:def:weights}, we can show that $(X_{T_{m_{n+1}}},X_{T_{m_{n+2}}},\ldots)$ and $\mathcal{F}_n$ are conditionally independent given $\mathcal{G}_n$; therefore,
$$\sqrt{n}\bigl(P_n(A|\mathcal{D})-\tilde{P}(A)\bigr)\overset{a.s.cond.}{\longrightarrow}\mathcal{N}\bigl(0,V(A)\bigr)\quad\textnormal{w.r.t. }\mathcal{F}.$$
On the other hand, \cref{results:dps:mon} implies that
$$\bigl|\sqrt{n}(P_n(A|\mathcal{D})-D_n(A)\bigr|=\sqrt{n}|P_n(A|\mathcal{D})-P_n(A)|\leq2\sqrt{n}\cdot P_n(\mathcal{D}^c)\overset{a.s.}{\longrightarrow} 0.$$
Then, using again the generalized Slutsky's theorem, $D_n(A)\overset{a.s.cond.}{\longrightarrow}\mathcal{N}\bigl(0,V(A)\bigr)$ w.r.t. $\mathcal{F}$.

Regarding the second result, we have
$$\bigl|\sqrt{n}(\hat{P}_n(A|\mathcal{D})-P_n(A|\mathcal{D}))-C_n(A)\bigr|\leq2\sqrt{n}\cdot P_n(\mathcal{D}^c)+2\sqrt{n}\cdot\hat{P}_n(\mathcal{D}^c)\overset{a.s.}{\longrightarrow}0;$$
therefore, $C_n(A)\overset{stably}{\longrightarrow}\mathcal{N}\bigl(0,U(A)\bigr)$.
\end{proof}

\subsection*{Acknowledgments}

The authors would like to thank the two anonymous referees for their careful reading and valuable comments. H. Sariev was partially supported by the Bulgarian Ministry of Education and Science under the National Research Programme ``Young scientists and postdoctoral students'' approved by DCM No. 577/17.08.2018.

\bibliography{mybib_submit} 
\bibliographystyle{plain}

\end{document}